\newtheorem{thm}{Theorem}[section]
\newtheorem{lem}[thm]{Lemma}
\newtheorem{prop}[thm]{Proposition}
\newtheorem{cor}[thm]{Corollary}
\newtheorem{defn}[thm]{Definition}
\newtheorem{rmk}[thm]{Remark}
\newtheorem{rmks}[thm]{Remarks}
\def\O{{\mathcal O}}
\def\P{{\mathbb P}}
\def\I{{\mathcal I}}
\def\fm{\mathfrak m}
\def\C{{\mathbb C}}
\def\Pic{\mathop{\rm Pic}}
\def\Cl{\mathop{\rm Cl}}
\def\codim{\mathop {\rm codim}}
\def\supp{\mathop{\rm Supp}}
\def\sing{\mathop{\rm Sing}}
\title{Grothendieck-Lefschetz theorem with base locus}
\author{John Brevik}
\address{California State University at Long Beach, 
Department of Mathematics and Statistics, Long Beach, CA 90840}
\email{jbrevik@csulb.edu}
\author{Scott Nollet}
\address{Texas Christian University, Department of Mathematics, 
Fort Worth, TX 76129}
\email{s.nollet@tcu.edu}
\subjclass[2000]{Primary: 14B07, 14H10, 14H50}
\begin{document}
\bibliographystyle{plain}

\begin{abstract}
We compute the divisor 
class group of the 
general hypersurface $Y$ of a complex projective normal variety $X$ of dimension 
at least four containing a fixed base locus $Z$. 
We deduce that completions of normal local complete intersection 
domains of finite type over $\mathbb C$ of dimension $\ge 4$ are completions of UFDs 
of finite type over $\mathbb C$. 
\end{abstract}

\maketitle

\section{Introduction}\label{intro}

In his 1882 treatise on space curves \cite{noether}, Noether tacitly stated that the 
general surface $S \subset \mathbb P_{\mathbb C}^3$ of degree $d \geq 4$ has Picard 
group $\Pic S \cong \mathbb Z$, generated by $\O_S (1)$. One of the high points in 
algebraic geometry and Hodge theory occurred in the mid 1920s when Lefschetz used 
topological methods to prove Noether's statement \cite{lefschetz}. 
By the mid-1960s, algebraic geometers began to refine the statement and proof of this 
result, leading to the modern development of Noether-Lefschetz theory. The general 
problem is to determine for what spaces $X$ with ample line bundle $L$ is it true 
that the restriction map $\Pic X \to \Pic Y$ is an isomorphism for the general 
member $Y \in |H^0(X,L)|$. A historical account with numerous references can be 
found in our survey \cite{BN3}. 

Ravindra and Srinivas proved that for base-point free ample line bundles $L$ on 
a normal complex projective variety $X$ of dimension at least four, the restriction 
map $\Cl X \to \Cl Y$ is an isomorphism for general $Y \in |L|$ \cite{rs1}. 
Around this time we extended work of Lopez \cite{lopez}, proving that for any closed 
subscheme $Z \subset \mathbb P^N$ of codimension $\geq 2$ lying on a normal hypersurface, 
the class group of the general high degree hypersurface $H$ containing $Z$ is freely generated 
by $\O_H (1)$ and the supports of the codimension two irreducible components of $Z$ \cite{BN1}. 
Here we present a generalization of these results: 

\begin{thm}\label{two}
Let $X \subset \mathbb P_{\mathbb C}^N$ be a normal projective variety of dimension $\geq 3$, and let
$Z \subset X$ be a closed subscheme of codimension $\geq 2$ with  
$\I_Z (d-1)$ generated by sections. Assume  
that the 
codimension two irreducible components $Z_1, Z_2, \dots, Z_s \subset Z$ satisfy
\begin{enumerate}[(a)]
\item $Z_i \not \subset \sing X$. 
\item $Z_i$ has generic embedding dimension at most $\dim X - 1$. 
\end{enumerate}
Then the general member $Y \in |H^0(X,\I_Z (d))|$ is normal; furthermore the homomorphism 
$\displaystyle \alpha: \bigoplus_{i=1}^s \mathbb Z \oplus \Cl X \to \Cl Y$ given by 
$\displaystyle \alpha (a_1, a_2, \dots, a_s, L) = \sum_{i=1}^s a_i \supp Z_i + L|_Y$ is  
\begin{enumerate}[(i)]
\item an isomorphism if $\dim X \geq 4$. 
\item injective with finitely generated cokernel if $\dim X = 3$. 
\end{enumerate}

\end{thm}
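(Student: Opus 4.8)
The plan is to blow up the base locus $Z$ and reduce to the base-point-free Grothendieck--Lefschetz theorem of Ravindra and Srinivas \cite{rs1}. The first task is to prove that the general $Y$ is normal. Since $\I_Z(d-1)$ is globally generated, so is $\I_Z(d)$, and the base scheme of $|H^0(X,\I_Z(d))|$ equals $Z$; away from $Z$ the general $Y$ is a general member of a globally generated system on the normal variety $X\setminus Z$, hence normal by a Bertini-type theorem (e.g. Flenner's theorem on normality of general members) in characteristic zero. The codimension-one points of $Y$ not covered this way are the generic points $\eta_i$ of the $Z_i$: hypothesis (a) makes $\O_{X,\eta_i}$ a two-dimensional regular local ring, and hypothesis (b) forces $\I_{Z,\eta_i}$ to contain an element of order one, so that in suitable regular parameters $\I_{Z,\eta_i}=(x,y^{m_i})$ and the general $f\in \I_{Z,\eta_i}$ has order one — whence $\O_{Y,\eta_i}$ is a discrete valuation ring and $Y$ is regular in codimension one. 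Combined with an $S_2$ estimate along the remaining (codimension $\ge 2$) locus of $Z$, coming from $X$ being $S_2$ together with the global generation of $\I_Z(d-1)$, this gives normality of $Y$.

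Next let $\pi\colon \widetilde X\to X$ be the normalized blow-up of $X$ along $Z$, so $\I_Z\O_{\widetilde X}=\O_{\widetilde X}(-E)$ with exceptional divisor $E=\sum_\alpha E_\alpha$. The local form $\I_{Z,\eta_i}=(x,y^{m_i})$ from (a), (b) shows that exactly one exceptional prime $E_i$ dominates each $Z_i$, that $E_i\to Z_i$ is generically a $\Pone$-bundle, and that the remaining exceptional primes $E_\beta$ lie over subvarieties of codimension $\ge 3$; since $X$ is normal one then gets
\[
\Cl \widetilde X \;\cong\; \pi^*\Cl X \;\oplus\; \bigoplus_{i=1}^{s}\Z[E_i]\;\oplus\;\bigoplus_{\beta}\Z[E_\beta].
\]
The line bundle $\widetilde{\L}=\pi^*\O_X(d)\otimes \O_{\widetilde X}(-E)$ is globally generated, being $\pi^*\O_X(1)$ tensored with the globally generated $\pi^*\O_X(d-1)\otimes\O_{\widetilde X}(-E)$, and it is big; its general member $\widetilde Y$ is the strict transform of the general $Y\in|H^0(X,\I_Z(d))|$. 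The induced birational morphism $\pi|_{\widetilde Y}\colon \widetilde Y\to Y$ meets each general $\Pone$-fibre of $E_i$ transversally in one reduced point, hence carries the prime divisor $\widetilde D_i:=\widetilde Y\cap E_i$ birationally onto $\supp Z_i$, while it contracts exactly the divisors $\widetilde D_\beta:=\widetilde Y\cap E_\beta$ to their codimension $\ge 3$ centers. Therefore $(\pi|_{\widetilde Y})_*\colon \Cl \widetilde Y\to\Cl Y$ is surjective with kernel the finitely generated group $K=\langle[\widetilde D_\beta]\rangle_\beta$, and it sends $[\widetilde D_i]\mapsto[\supp Z_i]$ and $[\pi^*L|_{\widetilde Y}]\mapsto [L|_Y]$.

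Then we apply \cite{rs1} to the pair $(\widetilde X,\widetilde{\L})$. Because $\widetilde{\L}$ is only globally generated and big, we first reduce to the ample case by replacing $\widetilde X$ with the normalization $X'$ of its image under $|\widetilde{\L}^{\otimes n}|$ for $n\gg 0$: as $\widetilde{\L}$ restricts to a big divisor on every prime divisor of $\widetilde X$, this modification is an isomorphism in codimension one, so $\Cl X'\cong\Cl\widetilde X$, and the general $\widetilde Y$ is the preimage of a general hyperplane section of $X'$. Thus \cite{rs1} gives that the restriction $r\colon\Cl\widetilde X\to\Cl\widetilde Y$ is an isomorphism when $\dim X\ge 4$, and injective with finitely generated cokernel when $\dim X=3$ (the Hodge-theoretic Noether--Lefschetz statement). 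The composite $(\pi|_{\widetilde Y})_*\circ r$ annihilates $\bigoplus_\beta\Z[E_\beta]$ and restricts on the complementary summand $\pi^*\Cl X\oplus\bigoplus_i\Z[E_i]$ to exactly $\alpha$ (identifying $\pi^*\Cl X$ with $\Cl X$ and $\Z[E_i]$ with the $i$-th copy of $\Z$). When $\dim X\ge 4$, $r$ is bijective and $(\pi|_{\widetilde Y})_*$ surjective, so $\alpha$ is surjective; and $\alpha$ is injective because the kernel of $(\pi|_{\widetilde Y})_*\circ r$ is the summand $\bigoplus_\beta\Z[E_\beta]$, which meets the complementary summand trivially — so $\alpha$ is an isomorphism, proving (i). When $\dim X=3$ the same bookkeeping gives $\alpha$ injective, and $\cok\alpha$ is a quotient of the finitely generated group $\pi_*(\cok r)$, proving (ii).

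The main obstacle is the interaction of the normality step with the local analysis of the blow-up: one must show that (a) and (b) genuinely force the clean picture $\I_{Z,\eta_i}=(x,y^{m_i})$, hence that the exceptional divisor over each codimension-two component is irreducible and that $\widetilde Y$ meets its ruling in a single reduced point, so that $[\widetilde D_i]$ maps to $[\supp Z_i]$ rather than to a proper multiple. Subsidiary technical points are the $S_2$-ness of the general $Y$ and the reduction of \cite{rs1} from ample to globally-generated-and-big line bundles through the morphism defined by $|\widetilde{\L}^{\otimes n}|$.
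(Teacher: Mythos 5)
Your overall strategy is the paper's: blow up $Z$, compute the class group of the blow-up, restrict to the strict transform $\widetilde Y$ via Ravindra--Srinivas, and push forward to $Y$. But there is a genuine gap at the step you dispatch with ``since $X$ is normal one then gets'' the decomposition $\Cl \widetilde X \cong \pi^*\Cl X \oplus \bigoplus_i \Z[E_i] \oplus \bigoplus_\beta \Z[E_\beta]$. The excision sequence only gives that the exceptional primes \emph{generate} the kernel of $\Cl\widetilde X \to \Cl X$ and that this surjection splits; it does not give freeness, i.e.\ the absence of relations among the $[E_i]$, $[E_\beta]$ and $\pi^*\Cl X$. Your proof of injectivity of $\alpha$ leans on exactly this: you need a class in $\pi^*\Cl X \oplus \bigoplus_i\Z[E_i]$ that lands in $\langle [E_\beta]\rangle_\beta$ to vanish. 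This independence is the substantive new content of the theorem (it is what upgrades the authors' earlier ``generated by'' statement to ``freely generated by''), and the paper spends Proposition~\ref{free} and Lemma~\ref{freepoint} proving it: cut $X$ by a general linear space down to a normal surface missing the bad locus, where $\I_Z$ has the local form $(x,y^m)$, and use that $\O_{\widetilde X}(-E)$ is the relative $\O(1)$ of the ${\bf Proj}$ construction, hence nontrivial on each exceptional $\Pone$. Note also that the paper makes \emph{no} independence claim for the divisors over the codimension $\ge 3$ locus (your $E_\beta$); it excises the preimage of the bad locus from the start precisely because the blow-up is uncontrolled there, and only needs $\Cl$ of the resulting open set. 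Your stronger assertion that the $[E_\beta]$ also form a free summand is unsupported and likely false in general, though fortunately not needed.

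Two secondary points. First, you treat $\widetilde{\L}=\pi^*\O_X(d)\otimes\O_{\widetilde X}(-E)$ as merely globally generated and big and then propose a contraction to reduce to the ample case of \cite{rs1}; but the hypothesis that $\I_Z(d-1)$ (one twist \emph{lower}) is globally generated is in the statement precisely so that, by Peskine--Szpiro, the blow-up embeds into $\P H^0(X,\I_Z(d))^*$ and the relevant system is very ample --- your detour is unnecessary, and as written it is also shakier, since the claim that $\widetilde{\L}$ restricts to a big class on \emph{every} exceptional prime (needed for your contraction to be an isomorphism in codimension one) is itself unjustified without that embedding. Second, you pass to the normalized blow-up so as to quote \cite{rs1} directly; the paper instead keeps the (possibly non-normal) blow-up, proves normality away from the bad locus (Proposition~\ref{properties}), and proves a relative version of Ravindra--Srinivas for non-normal $V$ with prescribed non-normal locus (Proposition~\ref{normaladjustment}) by normalizing inside the proof and tracking the divisorial components of the non-normal locus through the hyperplane section. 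Your normalization-first route can be made to work, but it shifts the difficulty into controlling the extra exceptional primes of the normalization, which is exactly where your argument is silent.
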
  

\begin{rmk}\label{hypoth}{\em
Hypotheses (a) and (b) in Theorem \ref{two} are the weakest 
allowing the general $Y \in |H^0(\I_Z (d))|$ to be normal, for 
if either condition fails on a codimension two component $Z_i$, 
then $Y$ is singular along $Z_i$, hence not regular in codimension one.
\em}\end{rmk}

\begin{rmk}\label{zero}{\em
With the hypotheses of Theorem \ref{two}, we note some special cases. 
\begin{enumerate}
\item[(a)] If $Z$ is empty, the restriction map induces $\Cl X \cong \Cl Y$. 
This is the theorem of Ravindra and Srinivas \cite{rs1}, but they only 
assume $\O_X (d)$ to be ample and base-point free. 
\item[(b)] More generally, $\Cl X \cong \Cl Y$ if $\codim(Z,X)> 2$. 
\item[(c)] Taking $X = \mathbb P^N$ with $N>3$ improves our earlier result \cite[Theorem 1.7]{BN1}, where 
we showed that $\Cl Y$ is generated as above, but noted neither the splitting nor the freeness of the geometric generators 
$\supp Z_i$ given here. 
\end{enumerate}
\em}\end{rmk}

\begin{rmk}\label{dim3}{\em
When $\dim X = 3$, we expect $\alpha$ to be an isomorphism for the {\it very general} member 
$Y \in |H^0(X,\I_Z (d))|$ under stronger hypothesis, such as a stronger ampleness condition on 
$\O_X (1)$. Ravindra and Srinivas were successful with the assumption that $K_X (1)$ be generated 
by global sections (for $Z$ empty) \cite{rs2}. We hope to prove this at some point in the future. 
See Section~\ref{threefolds} for further discussion.
\em}\end{rmk}

An easy induction leads to the following corollary. 

\begin{cor}\label{one}
Let $X \subset \mathbb P^N$ be a normal projective variety of dimension $\geq 4$ 
containing a closed subscheme $Z$ of dimension $r$ with $2 \leq r < \dim X -1$ and 
suppose that the irreducible components $Z_i$ of dimension $r$ satisfy 
\begin{enumerate}
\item[(a)] $Z_i \not\subset \sing X$. 
\item[(b)] $Z_i$ has generic embedding dimension at most $r+1.$
\end{enumerate}
Then the general complete intersection $Y \subset X$ of dimension $r+1$ containing $Z$ 
is normal with class group $\Cl Y$ freely generated by $\Cl X$ and the supports of the $Z_i$. 
In particular, $\Cl Y \cong \Cl X$ if $Z$ contains no components of dimension $r$. 
\end{cor}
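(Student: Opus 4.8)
The plan is to deduce Corollary \ref{one} from Theorem \ref{two} by induction on $\dim X$ (with $r$ fixed), stripping off one general hypersurface section at a time. Choose integers $d_1, \dots, d_{\dim X - r - 1}$ large enough that $\I_Z(d_j - 1)$, and hence also $\I_Z(d_j)$, is globally generated on $X$; since global generation survives restriction to a closed subvariety and passage to a quotient sheaf, the ideal sheaf of $Z$ on each subvariety produced below is again globally generated after twisting by $d_j - 1$ or $d_j$, so Theorem \ref{two} applies at every stage. A complete intersection of dimension $r+1$ in $X$ containing $Z$ has each of its defining divisors through $Z$, so the general such $Y$ equals $X_{\dim X - r - 1}$, where $X_0 = X$ and $X_j = X_{j-1} \cap H_j$ with $H_j$ a general member of $|H^0(X_{j-1}, \I_Z(d_j))|$, the $H_j$ chosen in succession. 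The base case is $\dim X = r+2$: then the chain has a single step, $Y = X \cap H_1$ is a hypersurface section of $X$, the subscheme $Z$ has codimension two in $X$ with codimension-two components exactly $Z_1, \dots, Z_s$, and $\dim X - 1 = r+1$, so (a) and (b) of the corollary are literally (a) and (b) of Theorem \ref{two}, and part (i) of that theorem yields the conclusion.

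For the inductive step, suppose $\dim X > r+2$, so $Z$ has codimension $\geq 3$ in $X$ and hence no codimension-two components. Then Theorem \ref{two}(i) shows $X_1$ is a normal projective variety and that $\alpha$, which here is just restriction, is an isomorphism $\Cl X \cong \Cl X_1$ (cf.\ Remark \ref{zero}). I would then check that $X_1$, with $Z$ and the same components $Z_1, \dots, Z_s$, again satisfies the hypotheses of the corollary: the inductive hypothesis applied to $X_1$ identifies the general complete intersection of dimension $r+1$ in $X_1$ containing $Z$---which is precisely $Y$---as a normal variety with $\Cl Y$ freely generated by $\Cl X_1 \cong \Cl X$ and the supports of the $Z_i$, as wanted. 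The inequalities $\dim X_1 = \dim X - 1 \geq r + 2 \geq 4$ and $2 \leq r < \dim X_1 - 1$ are immediate, and condition (b) is intrinsic to $Z$ and hence inherited, so the only real point is condition (a) for $X_1$.

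To verify it, fix $i$ and let $\eta$ be the generic point of $Z_i$. By (a) for $X$ the local ring $\O_{X,\eta}$ is regular, of dimension $c = \codim(Z_i, X) \geq 3$. Let $e$ be the embedding dimension of $\O_{Z,\eta}$; since $Z_i$ is generically smooth of dimension $r$, the generic embedding dimension of $Z_i$---the embedding dimension of $Z$ at a general, hence smooth, point of $Z_i$---equals $r + e$, so hypothesis (b) says precisely that $e \leq 1$. Hence the image of the stalk $\I_{Z,\eta}$ in $\fm_\eta / \fm_\eta^2$ is a subspace of dimension $c - e \geq 2$. Since $\I_Z(d_1)$ is globally generated, a general section $f \in H^0(X, \I_Z(d_1))$ has image in $\fm_\eta / \fm_\eta^2$ a general, hence nonzero, element of that subspace; so $f \notin \fm_\eta^2$, which makes $X_1$ regular at $\eta$. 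Thus $Z_i \not\subset \sing X_1$, which is (a) for $X_1$, and the induction closes.

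For the final assertion, if $Z$ has no component of dimension $r$ then $Z$ has codimension $> 2$ in the ambient variety at each of the $\dim X - r - 1$ stages, so $\alpha$ is the restriction map throughout and $\Cl Y \cong \Cl X$ by Theorem \ref{two}(i). The one step above that is not pure bookkeeping is the propagation of hypothesis (a), which is where (b) enters: as long as $Z_i$ has codimension at least three in the current ambient, the ideal of $Z$ leaves room modulo $\fm_\eta^2$ for a general next equation to be a local parameter at $\eta$, keeping that hypersurface section regular along $Z_i$.
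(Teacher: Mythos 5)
Your proposal is correct and is precisely the ``easy induction'' from Theorem \ref{two} that the paper invokes without writing out: strip off one general hypersurface section at a time, use part (i) with no codimension-two components until $Z$ reaches codimension two, and check that the hypotheses persist. The only step requiring genuine care is the propagation of hypothesis (a) to $X_1$, and your argument via the image of $\I_{Z,\eta}$ in $\fm_\eta/\fm_\eta^2$ having dimension $c-e\geq 2$ handles it correctly.
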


For our applications to local rings, the special case $X = \mathbb P^N$ is most important: 

\begin{cor}\label{original}
Let $Z \subset \mathbb P^N$ be an $r$-dimensional closed subscheme and consider the 
general complete intersection $Y \subset \mathbb P^N$ of dimension $n \geq 3$ containing $Z$. 
Assume that the locus of points where $Z$ is of embedding dimension $n$ has dimension at most $n-2$. 
Then $Y$ is normal and 
\begin{enumerate}
\item[(a)] If $n>r+1$, then $\Cl Y \cong \mathbb Z$ is generated by $\O_Y (1)$. 
\item[(b)] If $n=r+1$, then $\Cl Y$ is freely generated by $\O_Y (1)$ and the supports of the $r$-dimensional irreducible components of $Z$. 
\end{enumerate}
\end{cor}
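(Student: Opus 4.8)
The plan is to realize $Y$ as a chain of general hypersurface sections and apply Theorem~\ref{two} at each link. We may assume $n < N$, since otherwise $Y = \mathbb P^N$. Fix a degree $d \gg 0$ with $\I_Z(d-1)$ globally generated on $\mathbb P^N$, put $X_N = \mathbb P^N$, and having built a normal projective variety $X_{k+1} \supseteq Z$ of dimension $k+1$, let $X_k$ be a general member of $|H^0(X_{k+1}, \I_{Z/X_{k+1}}(d))|$, stopping at $X_n = Y$. This is legitimate: $\I_{Z/X_{k+1}}(d-1)$ is globally generated, being a quotient of the restriction to $X_{k+1}$ of the globally generated sheaf $\I_{Z/\mathbb P^N}(d-1)$; and since $X_{k+1}$ is a complete intersection of hypersurfaces of degree $d$, the Koszul resolution of its ideal gives $H^1(\mathbb P^N, \I_{X_{k+1}}(d)) = 0$, so the restriction $H^0(\mathbb P^N, \I_Z(d)) \to H^0(X_{k+1}, \I_{Z/X_{k+1}}(d))$ is onto and this chain does compute the general complete intersection of dimension $n$ through $Z$ in $\mathbb P^N$.

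I would then verify the hypotheses of Theorem~\ref{two} with $X = X_{k+1}$ at every link. Normality of $X_{k+1}$ holds by induction: $\mathbb P^N$ is smooth and Theorem~\ref{two} outputs a normal variety at each step. Because $n < N$ and $\dim Z = r \le n-1$, one has $\codim(Z, X_{k+1}) = (k+1) - r \ge (n+1) - (n-1) = 2$ throughout the range $n \le k \le N-1$. Moreover $\dim X_{k+1} = k+1 \ge n+1 \ge 4$ since $n \ge 3$, so it is always conclusion (i) of Theorem~\ref{two} that applies. Finally, the codimension-two irreducible components of $Z$ in $X_{k+1}$ are the components of $Z$ of dimension $k-1$; since $k \ge n$ while every component of $Z$ has dimension $\le r \le n-1$, such components occur only when $r = n-1$ and $k = n$, that is, only at the last link $X_{n+1} \to Y$ and only in case (b).

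This leaves two cases. In case (a), where $n > r+1$, hypotheses (a) and (b) of Theorem~\ref{two} hold vacuously at every link, so iterating conclusion (i) produces a normal $Y$ with $\Cl Y \cong \Cl \mathbb P^N \cong \Z$, generated by $\O_{\mathbb P^N}(1)|_Y = \O_Y(1)$; here the embedding-dimension hypothesis is automatic, its locus lying in $Z$, of dimension $r \le n-2$. That is conclusion (a). In case (b), where $n = r+1$, the claim is exactly Corollary~\ref{one} applied to $X = \mathbb P^N$: this is legitimate because $N \ge n+1 = r+2 \ge 4$ and $2 \le r < N-1$; hypothesis (a) there holds since $\sing \mathbb P^N = \emptyset$; and hypothesis (b) there — that the $r$-dimensional components $Z_1, \dots, Z_s$ of $Z$ have generic embedding dimension at most $r+1 = n$ — is forced by the embedding-dimension hypothesis, for if some $Z_i$ had generic embedding dimension exceeding $n$, the locus of such points of $Z$ would contain a dense open subset of $Z_i$ and so have dimension $n-1 > n-2$. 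Corollary~\ref{one} then yields a normal $Y$ of dimension $n$ whose class group is freely generated by $\O_Y(1)$, the restriction of the generator $\O(1)$ of $\Cl \mathbb P^N$, together with $\supp Z_1, \dots, \supp Z_s$, which is conclusion (b).

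So the corollary is a specialization of Theorem~\ref{two} and Corollary~\ref{one} plus dimension bookkeeping. The only ingredient that is not purely formal is the control of $\sing X_{n+1}$ along the components $Z_i$ needed for hypothesis (a) of Theorem~\ref{two} at the last link in case (b); but that is exactly the Bertini-type smoothness estimate established in the proof of Theorem~\ref{two}, and it is already absorbed into the ``easy induction'' behind Corollary~\ref{one}, so in practice there is no genuine obstacle to overcome here.
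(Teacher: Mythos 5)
Your proposal is correct and is essentially the paper's own argument: the paper offers no separate proof of this corollary, intending it as the specialization $X=\mathbb P^N$ of Corollary~\ref{one} (itself the ``easy induction'' on Theorem~\ref{two}), and your chain of general hypersurface sections with the surjectivity of $H^0(\mathbb P^N,\I_Z(d))\to H^0(X_{k+1},\I_{Z/X_{k+1}}(d))$ plus the dimension bookkeeping is exactly that induction made explicit. The verification that codimension-two components of $Z$ appear only at the last link and only when $n=r+1$, and that the embedding-dimension hypothesis supplies hypothesis (b) of Corollary~\ref{one}, matches the intended reading.
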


The first few sections are devoted to proving Theorem \ref{two}. 
In section four we give an application to local commutative algebra, 
namely that local complete intersection normal domains of dimension at least 
three are completions of UFDs.  
In the last section we note some special cases where Theorem \ref{two} 
holds for $\dim X = 3$ and also examples where our proof fails in this case. 
We work over an algebraically closed field of characteristic zero, but this 
hypothesis is only used in Proposition \ref{normaladjustment} when we invoke the 
theorem of Ravindra and Srinivas \cite{rs1}. 

\section{Proof of the main theorem}

As in Theorem \ref{two}, let $X \subset \mathbb P^N$ 
be a normal projective variety of dimension at least three containing a 
closed subcheme $Z \subset X$ of codimension $\geq 2$ and assume that 
the codimension two irreducible components $Z_1, Z_2, \dots, Z_s$ ($s=0$ is possible) satisfy
\begin{enumerate}
\item[(a)] $Z_i \not \subset \sing X$ and 
\item[(b)] The generic embedding dimension of $Z_i$ is at most $\dim X - 1$. 
\end{enumerate}

The hypersurfaces containing $Z$ define a rational map $X \to \mathbb P H^0(X, \I_Z (d))^*$ with scheme-theoretic base locus $Z$ for $d$ large. The blow-up 
$f: \widetilde X \to X$ at $Z$ removes the points of indeterminacy 
of this rational map \cite[II, Ex. 7.17.3]{AG}, giving an honest morphism 
$\sigma: \widetilde X \to \mathbb P = \mathbb P H^0(X, \I_Z (d))^*$. If $\I_Z (k)$ is generated by global sections, 
then $\sigma$ is a closed immersion for $d>k$ \cite[Prop. 4.1]{PS} 
and we obtain the diagram 
\begin{equation}\label{blowup}
\begin{array}{ccccc}
E & \subset & \widetilde X & \stackrel{\sigma}{\hookrightarrow} & \mathbb P H^0(X, \I_Z (d))^* \\
\downarrow & & f \downarrow & & \\
Z & \subset & X & & 
\end{array}
\end{equation}
in which $\sigma$ is a closed immersion and 
$\sigma^*(\O_{\mathbb P} (1)) = f^* (\O_X (d)) \otimes \O_{\widetilde X} (-E)$, $E$ being the exceptional divisor. 

\subsection{Blowing up the base locus}

To have some control over the blow-up $\widetilde X$, we draw attention 
to the bad locus $B \subset Z$, over which we have little control, defined 
as follows:  

\begin{defn}{\em The {\it bad locus} $B \subset Z$ consists of the points 
$z \in Z$ such that 
\begin{enumerate}
\item[(a)] $X$ is singular at $z$ or 
\item[(b)] $\I_Z$ is not $2$-generated at $z$ or 
\item[(c)] $Z$ has embedding dimension $\geq \dim X$ at $z$. 
\end{enumerate}
\em}\end{defn}

\begin{rmk}\label{codimB}{\em
Conditions (a) and (b) in the hypothesis of Theorem \ref{two} imply that 
$\codim(B,X) \geq 3$. Indeed, $I_Z$ is not 2-generated along irreducible 
components of codimension $\geq 3$ and for an irreducible component 
$Z_i \subset Z$ of codimension two, $X$ is generically smooth along $Z_i$ and 
$Z$ has generic embedding dimension $\leq \dim X -1$.
\em}\end{rmk}

The blow-up $f:\widetilde X \to X$ along $Z$ is quite 
nicely behaved away from $f^{-1}(B)$:  

\begin{prop}\label{properties}
Let $f: \widetilde X \to X$ be the blow-up as in diagram (\ref{blowup}). Then 
\begin{enumerate}
\item $f:E - f^{-1}(B) \to Z - B$ has the structure of a $\mathbb P^1$-bundle over $\bigcup Z_i$. 
\item For $z \in Z-B$, $\sigma$ embeds the fiber 
$f^{-1}(z) \cong \mathbb P^1$ as a straight line.
\item For $z \in Z-B$, $f^{-1}(z) \cap \sing \widetilde X$ is empty or consists of a single point.   
\item $\widetilde X - f^{-1}(B)$ is normal. 
\end{enumerate}
\end{prop}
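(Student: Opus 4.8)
The plan is to reduce all four assertions to a local study of the blow-up near $f^{-1}(Z-B)$, using the definition of the bad locus: at any $z\in Z-B$ the variety $X$ is smooth of dimension $n:=\dim X$, the ideal $I_{Z,z}$ is generated by two elements, and $Z$ has embedding dimension at most $n-1$ at $z$. I would first record two consequences. First, $Z-B\subseteq\bigcup Z_i$: if $z$ lay only on components of $Z$ of codimension $\ge 3$, then $\mathrm{ht}\,I_{Z,z}\ge 3$ and, by Krull's height theorem, $I_{Z,z}$ could not be $2$-generated, so $z\in B$. Second, since $\O_{X,z}$ is Cohen--Macaulay and $\mathrm{ht}\,I_{Z,z}=2$ (it is $\ge 2$ by hypothesis and $\le 2$ because $2$-generated), any two generators $g,h$ of $I_{Z,z}$ form a regular sequence, so $\O_{X,z}/I_{Z,z}$ is Cohen--Macaulay of dimension $n-2$ and, near $z$, the scheme $Z$ is a local complete intersection of pure codimension two with no embedded points. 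I would then fix a small affine neighbourhood $W$ of $z$ in $X$ with $X$ smooth on $W$, $W\cap B=\emptyset$, and $\I_Z|_W=(g,h)$ a regular sequence. Since a regular sequence generates an ideal of linear type, $f^{-1}(W)=\Proj\,\O_W[U_0,V_0]/(hU_0-gV_0)$ is the divisor $\{hU_0=gV_0\}$ inside the smooth $(n+1)$-fold $W\times\Pone$.

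Assertion (1) is then immediate: $I/I^2$ is locally free of rank two on $Z\cap W$, so $E\cap f^{-1}(W)=\P(I/I^2)|_{Z\cap W}\to Z\cap W$ is a Zariski-locally trivial $\Pone$-bundle; and since $\codim(B,X)\ge 3$ by Remark~\ref{codimB}, the base $Z-B$ is open and dense in $\bigcup Z_i$. For assertion (2), I would use that $\sigma$ is a closed immersion (as in diagram (\ref{blowup})), so $\sigma$ maps the fibre $f^{-1}(z)\cong\Pone$ isomorphically onto its image in $\P$, which is a straight line exactly when $\deg(\sigma^*\O_{\P}(1)|_{f^{-1}(z)})=1$. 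As $\sigma^*\O_{\P}(1)=f^*\O_X(d)\otimes\O_{\widetilde X}(-E)$ and $f$ contracts $f^{-1}(z)$, the projection formula gives $\deg(f^*\O_X(d)|_{f^{-1}(z)})=0$; and $\O_{\widetilde X}(-E)$ restricts on $E=\P(I/I^2)$ to the relative hyperplane bundle, hence to $\O_{\Pone}(1)$ on each fibre $f^{-1}(z)$. So the degree is $1$.

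For assertion (3), I would apply the Jacobian criterion to the hypersurface $\{hU_0=gV_0\}\subset W\times\Pone$. Writing $\ell(g),\ell(h)\in\fm_{X,z}/\fm_{X,z}^2$ for the linear parts of $g,h$ at $z$, a short computation in the two standard affine charts of $\Pone$ shows that $\widetilde X$ is singular at the point $[a:b]\in f^{-1}(z)$ if and only if $a\,\ell(h)=b\,\ell(g)$ in $\fm_{X,z}/\fm_{X,z}^2$. Because $Z$ has embedding dimension at most $n-1$ at $z$, the vectors $\ell(g),\ell(h)$ are not both zero, so this linear condition has at most one solution $[a:b]\in\Pone$ — and none at all when $\ell(g),\ell(h)$ are linearly independent, i.e.\ when $Z$ is smooth at $z$. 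Hence $f^{-1}(z)\cap\sing\widetilde X$ is empty or a single point. Assertion (4) then follows by Serre's criterion: $f^{-1}(W)$ is an effective divisor in the smooth variety $W\times\Pone$, hence Cohen--Macaulay and in particular $S_2$; since $f^{-1}(W)\setminus E\cong W\setminus Z$ is smooth ($W$ being smooth), $\sing f^{-1}(W)$ lies in $f^{-1}(Z\cap W)$ and, by (3), meets each fibre of $f$ in at most a point, so it has dimension $\le\dim(Z\cap W)\le n-2$, i.e.\ codimension $\ge 2$ in $f^{-1}(W)$; thus $f^{-1}(W)$ is $R_1+S_2$, hence normal. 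Covering $E-f^{-1}(B)$ by such opens $f^{-1}(W)$ and using that $\widetilde X\setminus E\cong X\setminus Z$ is normal (because $X$ is), I conclude that $\widetilde X-f^{-1}(B)=f^{-1}(X-B)$ is normal.

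I expect assertion (3) to be the main obstacle: one has to set up the two charts of the blow-up correctly and track the linear part of $hU_0-gV_0$ along the fibre, and hypothesis (b) of Theorem~\ref{two} is used there in an essential way — if $\ell(g)$ and $\ell(h)$ both vanished, every $[a:b]$ would satisfy the singularity condition and $\widetilde X$ would be singular along all of $f^{-1}(z)$ (cf.\ Remark~\ref{hypoth}). The preliminary reduction, that $I_Z$ is genuinely generated by a length-two regular sequence near $Z-B$ with no embedded or higher-codimension components present, is routine commutative algebra but underlies assertions (1), (3) and (4) alike.
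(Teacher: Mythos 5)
Your proposal is correct and follows essentially the same route as the paper: the same local model $\{hU_0 - gV_0 = 0\} \subset W\times \mathbb P^1$ for the blow-up near $Z-B$, the same identification of $-E$ with the relative $\mathcal O(1)$ for (2), the same Jacobian-criterion computation locating at most one singular point per fiber for (3) (which the paper attributes to \cite{N} but also recalls explicitly), and the same $R_1 + S_2$ argument for (4). Your preliminary reductions (Krull height, regular sequence, linear type) just spell out what the paper states more briefly.
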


\begin{proof}
At points in $Z - B$, $X$ is smooth and the codimension two components $Z_i \subset X$ 
have embedding dimension at most $\dim X - 1$, hence $Z_i$ is locally a 
divisor on a smooth subvariety of codimension one in $X$ and therefore locally defined by two equations. 
Therefore in the {\bf Proj} construction for blow-up, the exceptional divisor over $Z-B$ takes the form 
$\mathbb P_{(Z-B)} (\I_Z / \I_Z^2)$ of a $\mathbb P^1$-bundle, proving (1). As noted earlier, the closed immersion 
$\sigma$ is defined by the line bundle $f^*( \O_X (d)) \otimes \O_{\widetilde X} (-E)$, which restricts simply to $-E$ 
on a fiber $f^{-1}(z) \cong \mathbb P^1$, but $-E$ is the relative $\O (1)$ in the {\bf Proj} construction of blow-up, 
so the fiber embeds as a straight line, proving (2).

Now (3) follows from \cite[Theorem 2.1]{N}, but we recall the local calculation here to 
say more. If $Z$ has local ideal $I=(x,f)$ away from $B$, where $H$ has equation $x=0$, 
then $\widetilde X$ is locally constructed as {\bf Proj} of the graded ring 
$\oplus I^j \cong A[X,F]/(xF-Xf)$, where $A = \O_X$ is a regular ring. 
If $(x,f)$ extends to a regular system of parameters for the maximal ideal $\fm$, then the fiber is 
contained in the smooth locus of $\widetilde X$, but this is not the case if $f \in \fm^2$. 
By the Jacobian criterion, the only singularity in the central fiber occurs at the point $F=0$. 
At smooth points of the support of $Z$, the ideal takes the form $(x,y^m)$, where $(x,y)$ 
extends to a regular system of parameters. 

Clearly $\widetilde X - f^{-1}(B)$ is normal away from $E$, since $f:\widetilde X \to X$ 
is an isomorphism there. Looking along $E = f^{-1}(Z-B)$, we see that the singular locus 
has codimension $\geq 2$ because $Z-B$ has codimension two and there is at most one 
singular point in each fiber. 
Furthermore, as seen above, $\widetilde X$ is locally defined by the equation $xF-Xf$ in 
$X \times \mathbb P^1$, and hence satisfies Serre's condition $S_2$, therefore 
$\widetilde X - f^{-1}(B)$ is normal along $E$ as well. 
\end{proof}

\begin{rmk}\label{xym} {\em Note from the proof above that at points of $Z$ away from the bad locus $B$ where components of $Z$ are not intersecting, the ideal of $Z$ has the form $(x,y^m)$ in local coordinates, where $m$ is the multiplicity of the 
component at that point.} \end{rmk}

\begin{rmk}{\em
Since we put no condition on the irreducible components of $Z$ having 
codimension $> 2$, the blow-up map $f: \widetilde X \to X$ can behave 
badly along this locus. Fortunately, these components 
are contained in $B$, and we need not confront them.
\em}\end{rmk}

\subsection{A split exact sequence}

Having seen that $U=\widetilde X - f^{-1}(B)$ is normal, we can 
investigate $\Cl U$. 
By the previous proposition, $E \cap U$ is a $\mathbb P^1$-bundle 
over $\bigcup_{i=1}^{s} Z_i$, 
hence it consists of $s$ irreducible components $E_1, \dots , E_s$. Applying \cite[II, Prop. 6.5]{AG} we obtain an exact sequence 
\begin{equation}
\bigoplus_{i=1}^s \mathbb Z \to \Cl U \to \Cl (\widetilde X - E) \to 0 
\end{equation}
where the map on the left is given by images of the $E_i$. Notice that 
\[
\Cl (\widetilde X - E) \cong \Cl (X - Z) \cong \Cl X 
\]
and that via these isomorphisms the pull-back map $f^*: \Cl X \to \Cl \widetilde X$ provides a 
splitting for the surjection so that $\Cl U \cong G \oplus \Cl X$, where $G \subset \Cl U$ 
is the subgroup generated by the classes of the $W_i$. We claim that the $E_i$ have no relations, 
so that $G \cong \mathbb Z^s$ is a free abelian group: 
   
\begin{prop}\label{free}
With the notation above, there is a split-exact sequence 
\[
0 \to \bigoplus_{i=1}^s \mathbb Z \stackrel{\cdot E_i}{\to} \Cl (\widetilde X - f^{-1}(B)) \to \Cl X \to 0 
\]
where the splitting on the right is given by pull-back.  
\end{prop}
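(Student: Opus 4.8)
The splitting on the right is already established in the discussion preceding the proposition: the composite $\Cl X \xrightarrow{f^*} \Cl U \to \Cl(\widetilde X - E) \cong \Cl X$ is the identity, so it suffices to prove that the map $\bigoplus_{i=1}^s \mathbb Z \to \Cl U$ sending $e_i \mapsto [E_i]$ is injective; freeness of $G$ and exactness on the left then follow formally. So the plan is to show: if $\sum_{i=1}^s a_i E_i$ is principal on $U = \widetilde X - f^{-1}(B)$, then all $a_i = 0$.

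The key idea is to restrict to a well-chosen fiber $C = f^{-1}(z) \cong \mathbb P^1$ of the $\mathbb P^1$-bundle structure over a general point $z \in Z_j - B$ of a fixed component, and compute degrees. By Proposition~\ref{properties}(2), $\sigma$ embeds $C$ as a straight line, so $\sigma^*\O_{\mathbb P}(1)|_C = \O_{\mathbb P^1}(1)$; since $\sigma^*\O_{\mathbb P}(1) = f^*\O_X(d) \otimes \O_{\widetilde X}(-E)$ and $f^*\O_X(d)$ is trivial on the fiber $C$, we get $\O_{\widetilde X}(-E)|_C = \O_{\mathbb P^1}(1)$, i.e. $E \cdot C = -1$. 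First I would arrange that $C$ is disjoint from $\sing \widetilde X$ and from the other components $E_i$ ($i \neq j$): a general fiber over $Z_j - B$ avoids the finitely-many singular points in nearby fibers by Proposition~\ref{properties}(3), and meets $E_i$ only where $Z_j$ and $Z_i$ intersect, a proper closed subset of $Z_j$. With such a $C$ chosen, the intersection numbers satisfy $E_i \cdot C = 0$ for $i \neq j$, while $E_j \cdot C = E \cdot C = -1$ (using Remark~\ref{xym}, near a general point of $Z_j$ the ideal is $(x, y^m)$ and $E_j$ is reduced as a set, but one must be slightly careful about whether the scheme structure on $E$ contributes multiplicity — in fact the relevant divisor class is $\supp E_i = E_i$ and the computation $-E|_C = \O(1)$ is on the reduced fiber, giving $E_j \cdot C = -1$). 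Now if $D = \sum a_i E_i$ is principal on $U$, then since $C \subset U$ and $C$ is a complete curve in the smooth locus, $\deg(D|_C) = 0$; but $\deg(D|_C) = \sum_i a_i (E_i \cdot C) = -a_j$. Letting $j$ range over $1, \dots, s$ forces every $a_j = 0$.

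The main obstacle I anticipate is the bookkeeping around scheme structures and multiplicities on $E$: the fibers of the blow-up over a point of multiplicity $m$ carry a non-reduced structure (the $(x,y^m)$ picture of Remark~\ref{xym}), so one must be clear that the $E_i$ appearing in the statement are the \emph{reduced} supports of the exceptional components and that the degree computation $-E|_C = \O_{\mathbb P^1}(1)$ is compatible with this. A clean way around it: rather than intersection theory on a possibly-singular $\widetilde X$, restrict the line bundle $\O_{\widetilde X}(-\sum a_i E_i)$ to the fiber $C$ inside the \emph{smooth} locus $U \setminus \sing \widetilde X$ (which still contains a general such $C$), where everything is a genuine line-bundle degree on $\mathbb P^1$, and use $\O_{\widetilde X}(-E)|_C = \sigma^*\O_{\mathbb P}(1)|_C = \O_{\mathbb P^1}(1)$ directly from diagram (\ref{blowup}). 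One should also double-check that distinct components $Z_i$ of codimension two can meet: their pairwise intersections lie in codimension $\geq 3$, hence (being contained in $B$ once $X$ is singular there, or being lower-dimensional) do not interfere with the general fiber over $Z_j - B$. Once these points are settled the argument is short.
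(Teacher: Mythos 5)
Your overall strategy -- restrict a purported principal combination $\sum a_iE_i$ to a complete rational curve $C$ in $U$ on which $-E$ has degree $1$ -- is in substance the paper's own argument: the fiber $f^{-1}(z)$ over a general $z\in Z_j-B$ is exactly the exceptional curve of the blown-up surface section $\widetilde S$ that the paper uses in Proposition~\ref{free} via Lemma~\ref{freepoint}. But there is a genuine gap in your handling of the non-reduced components, and it occurs precisely at the point you flagged and then waved away. If $Z_j$ is generically non-reduced, say with ideal $(x,y^m)$, $m\geq 2$, at a general point of its support (Remark~\ref{xym}; such components satisfy hypotheses (a),(b) and are genuinely allowed), then the local model $xF=y^m$ shows that $\sing\widetilde X$ contains a whole section of the $\mathbb P^1$-bundle $E_j\to Z_j$: \emph{every} fiber over $Z_j-B$ passes through a singular point. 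Proposition~\ref{properties}(3) says each fiber has at most one singular point, not that a general fiber has none, so your ``clean way around it'' -- choosing $C$ inside the smooth locus -- is unavailable exactly in the problematic case. Worse, at that singular point the reduced component $E_j$ is not Cartier: as Weil divisors one has $E=mE_j$ near a general point of $E_j$ (in the chart the Cartier exceptional ideal is $(y^m)$ while the support is $V(y)$), so the identity $E_j\cdot C=E\cdot C=-1$ is simply false for $m\geq 2$, and $\O_{\widetilde X}(a_jE_j)|_C$ is not even a line bundle for general $a_j$.

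The argument is repairable along the lines you set up, but it needs the extra step: if $a_jE_j$ is principal on $U$ it is in particular Cartier at the singular point $q\in C$, whose local ring is an $A_{m-1}$-type hypersurface singularity with local class group $\mathbb Z/m\mathbb Z$ generated by $E_j$; hence $m\mid a_j$, so $a_jE_j=(a_j/m)E$ as Cartier divisors near $C$, and then $0=\deg\left(\O(a_jE_j)|_C\right)=-(a_j/m)$ forces $a_j=0$. (This is morally what the paper's Lemma~\ref{freepoint} is doing: the injectivity there ultimately rests on the \emph{Cartier} divisor $-E$ being the relative $\O(1)$, nontrivial on the exceptional $\mathbb P^1$.) The paper's detour through a general surface section $S=X\cap L$ buys two things your direct approach must supply by hand: it kills the bad locus ($B\cap L=\emptyset$ since $\codim(B,X)\geq 3$) and it isolates the local computation at a single fat point $(x,y^m)$ where the multiplicity bookkeeping can be done once and cleanly. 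Your remaining points -- the splitting, the triviality of $f^*\O_X(d)$ on fibers, and the disjointness of a general fiber over $Z_j$ from the other $E_i$ -- are all correct.
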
  

\begin{proof}
In view of the discussion above, we need only show injectivity on the left, i.e. that the 
$W_i$ have no relations in $\Cl (\widetilde X - f^{-1}(B))$. 
To see this, intersect $X$ with a general linear subspace $L \subset \mathbb P^N$ of 
codimenion equal to $\dim X - 2$ to obtain a normal surface $S=X \cap L$ in which 
all properties of the triple $(X,Z,B)$ hold for $(X \cap L, Z \cap L,B \cap L)$ and 
in particular $S \cap B = \emptyset$. Choosing $L$ generally enough 
to meet the $Z_i$ tranversely at smooth points of their supports, the local ideal of 
$Z \cap L$ in $S$ takes the form $(x,y^m)$ as in Remark~\ref{xym} above. 
We have a commuting square 
\[
\begin{array}{ccc}
\bigoplus_{i=1}^s \mathbb Z & \to & \Cl (\widetilde X - f^{-1}(B)) \\
\downarrow & & \downarrow   \\
\bigoplus_{i=1}^t \mathbb Z & \to & \Cl \widetilde S 
\end{array}
\]
in which $t$ is the number of points in $S \cap Z$. The vertical map on the left is 
clearly injective and the horizontal map on the bottom is injective by $t$ 
applications of Lemma \ref{freepoint} which follows, so the horizontal map on the top 
is also injective. 
\end{proof}

\begin{lem}\label{freepoint}
Let $S$ be a normal projective surface and let $Z \subset S$ be a closed subscheme 
supported at a smooth point $p \in S$ which has embedding dimension at most one. 
If $\widetilde S$ is the blow-up of $S$ at $Z$, then $\Cl \widetilde S \cong \Cl S \oplus \mathbb Z$. 
\end{lem}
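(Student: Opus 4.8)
The plan is to reduce everything to a local model of the blow-up at $p$, feed that into the excision sequence for class groups, and then show that the reduced exceptional curve has infinite order by an intersection computation.

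First I would set up the local picture. Since $p$ is a smooth point of $S$ and $Z$ is supported at $p$ with embedding dimension at most one, the Artinian ring $\O_{Z,p}$ has embedding dimension $\le 1$, so $\O_{Z,p}\cong k[t]/(t^m)$ for some $m\ge 1$; choosing a regular system of parameters $x,y$ at $p$ with $y$ mapping to a generator of $\fm_{Z,p}$, one finds $\I_{Z,p}=(x,y^m)$ as in Remark~\ref{xym}. As $(x,y^m)$ is a regular sequence, the Rees algebra is $\O_{S,p}[U,V]/(xV-y^mU)$, so over a neighborhood of $p$ the blow-up $\widetilde S=\Proj\bigoplus_n\I_Z^n$ is covered by the two charts $\Spec k[x,y,v]/(xv-y^m)$ and $\Spec k[y,u]$, glued by $u=1/v$. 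From this model: the first chart is a hypersurface in $\mathbb{A}^3$, hence $S_2$, with singular locus at most the origin, so $\widetilde S$ is normal; the reduced exceptional fibre over $p$ is $E_{\rm red}\cong\Pone$, obtained by gluing the $u$-line to the $v$-line, and it is the unique codimension-one component of the exceptional divisor $E$; and tracing $\I_Z\O_{\widetilde S}$ through the two charts (where it is generated by $y^m$, resp.\ by $x$ with $\mathop{\rm ord}_{E_{\rm red}}(x)=m$) shows $E=m\,E_{\rm red}$ as Weil divisors.

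Away from $p$ the map $f$ is an isomorphism, so $\widetilde S-E\cong S-\{p\}$, and since $\{p\}$ has codimension two, $\Cl(\widetilde S-E)\cong\Cl(S-\{p\})\cong\Cl S$. Applying the excision sequence \cite[II, Prop.~6.5]{AG} to the prime divisor $E_{\rm red}$ gives
\[
\mathbb Z\ \xrightarrow{\,1\mapsto[E_{\rm red}]\,}\ \Cl\widetilde S\ \xrightarrow{\ \rho\ }\ \Cl S\ \longrightarrow\ 0,
\]
and $\rho$ is split by the pull-back $f^{*}$, which is well defined because $p$ is a smooth point: near $p$ a Weil divisor on $S$ is Cartier, so it can be pulled back, and a linear equivalence on $S$ pulls back to one on $\widetilde S$ since $f^{*}$ of a unit at $p$ is a unit along $E$. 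Hence $\Cl\widetilde S\cong\Cl S\oplus\langle[E_{\rm red}]\rangle$, and it remains only to show that $[E_{\rm red}]$ has infinite order, for then $\langle[E_{\rm red}]\rangle\cong\mathbb Z$.

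For this, suppose $k[E_{\rm red}]=0$ in $\Cl\widetilde S$ for some $k\ge1$. Because $E=m\,E_{\rm red}$ we get $kE=\mathop{\rm div}(g)$ for some $g\in k(\widetilde S)^{*}$, so the line bundle $\O_{\widetilde S}(-E)^{\otimes k}=\O_{\widetilde S}(-kE)$ is trivial (here we use that $E$, as the exceptional divisor of a blow-up, is Cartier). But $\O_{\widetilde S}(-E)=\I_Z\O_{\widetilde S}$ is relatively ample for $f$, hence has strictly positive degree on the complete curve $E_{\rm red}$, which is contracted by $f$; thus $\O_{\widetilde S}(-E)^{\otimes k}$ has degree $k\cdot\deg\bigl(\O_{\widetilde S}(-E)|_{E_{\rm red}}\bigr)>0$ on $E_{\rm red}$, contradicting triviality. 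I expect the main obstacle to be the bookkeeping in the local model — in particular confirming that $\widetilde S$ is normal and that $E$ equals $m\,E_{\rm red}$ when $Z$ is non-reduced; once that is in place, the excision sequence and the degree computation are routine.
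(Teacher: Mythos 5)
Your proof is correct and follows essentially the same route as the paper's: the local model $\I_Z=(x,y^m)$, the excision sequence for the exceptional $\mathbb P^1$ split by pull-back, and nontriviality of the exceptional class via the degree of the relative $\O(1)=\O_{\widetilde S}(-E)$ of the {\bf Proj} construction. Your added bookkeeping distinguishing the Cartier exceptional divisor $E=m\,E_{\rm red}$ from the prime divisor $E_{\rm red}$ when $Z$ is non-reduced is a worthwhile precision that the paper elides, but it is a refinement of the same argument rather than a different one.
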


\begin{proof}
Since $p$ is a smooth point and $Z$ has embedding dimension equal to one, there are local 
coordinates on $S$ at which $\I_Z = (x, y^m)$ for some $m \geq 1$ and in particular is a 
local complete intersection. It follows that if $f: \widetilde S \to S$ is the blow-up at $Z$, 
then the exceptional divisor $E \cong \mathbb P^1$, but there is a unique singular point 
$q \in E \subset \widetilde S$ on the exceptional divisor \cite{N}. In any event, $E$ is an 
irreducible and reduced divisor on $S$, so we have the standard exact 
sequence \cite[II, Prop. 6.5]{AG}
\begin{equation}\label{6.5}
\mathbb Z \stackrel{E}{\to} \Cl \widetilde S \to \Cl S \to 0 
\end{equation}
and we need to verify that the left map is injective. In view of the composite map  
\[
\mathbb Z \stackrel{\cdot E}{\to} \Cl \widetilde S \to \Pic E = \mathbb Z
\]

it suffices that $\O_{\tilde S} (E)|E$ is nontrivial, but this follows from the fact that $-E$ is the canonical $\O(1)$ for the {\bf Proj} construction 
of the blow-up.
\end{proof}

\subsection{Proof of the main theorem}

We set up the proof of Theorem \ref{two} with the following, which 
applies the theorem of Ravindra and Srinivas \cite{rs1} to the blow-up above. 

\begin{prop}\label{normaladjustment}
Let $V\subseteq \P^N$ be a projective variety of dimension at least three with very ample 
divisor $\O_V (1)$ and let $\Sigma \subset V$ be a closed subset containing 
the non-normal locus of $V$. 
Then for general $H \in |H^0(\mathbb P^N, \O_V (1))|$ and $W = V \cap H$, 
$W - \Sigma$ is normal and the restriction map $\Cl (V - \Sigma) \to \Cl (W - \Sigma)$ 
\begin{enumerate}
\item is an isomorphism if $\dim V > 3$. 
\item is injective with finitely generated cokernel if $\dim V = 3$. 
\end{enumerate} 
\end{prop}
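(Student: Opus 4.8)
The plan is to reduce to the theorem of Ravindra and Srinivas \cite{rs1} by normalizing $V$, and then to transport the conclusion across the open immersions $V-\Sigma \hookrightarrow V$ and $W - \Sigma \hookrightarrow W$ by an excision argument.

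Let $\nu:\overline{V}\to V$ be the normalization. Then $L:=\nu^*\O_V(1)$ is an ample, base-point free line bundle on the normal projective variety $\overline{V}$ of dimension $\dim V\ge 3$ (base-point freeness because $\O_V(1)$ is very ample and $\nu$ is finite), and $\overline{W}:=\nu^{-1}(W)$ is a general member of a base-point free linear subsystem of $|L|$, namely the one pulled back along $\nu$ from the system defining $W$; since $L$ is ample and base-point free, this subsystem defines a finite morphism from $\overline{V}$ to projective space. Because $\Sigma$ contains the non-normal locus of $V$, the (birational, finite) map $\nu$ restricts to an isomorphism $\overline{V}-\nu^{-1}(\Sigma)\cong V-\Sigma$ and, restricting once more, to an isomorphism $\overline{W}-\nu^{-1}(\Sigma)\cong W-\Sigma$; under these identifications the map of the proposition becomes the restriction map $\Cl(\overline{V}-\nu^{-1}(\Sigma))\to \Cl(\overline{W}-\nu^{-1}(\Sigma))$, so it suffices to treat the latter.

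By Bertini's theorem in characteristic zero a general such $\overline{W}$ is irreducible and normal --- here one uses that $\overline{V}$ is normal, $\dim\overline{V}\ge 3$, and the linear system is base-point free with finite associated morphism --- and hence $W-\Sigma$, being isomorphic to an open subset of $\overline{W}$, is normal as well. The cohomological input in the proof of \cite{rs1} depends only on $\overline{W}$ being an ample effective divisor, i.e. on $L=\O_{\overline{V}}(\overline{W})$ being ample, so that argument applies to a general member of the present base-point free system and gives: the restriction $\Cl\overline{V}\to\Cl\overline{W}$ is an isomorphism if $\dim V>3$, and is injective with finitely generated cokernel if $\dim V=3$. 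This is the only step that uses the characteristic-zero hypothesis.

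Finally I would run the excision argument. Applying \cite[II, Prop. 6.5]{AG} to the pairs $(\overline{V},\nu^{-1}(\Sigma))$ and $(\overline{W},\nu^{-1}(\Sigma))$ gives a commuting ladder with right-exact rows
\[
\begin{array}{ccccccc}
\bigoplus_{D}\Z & \ra & \Cl\overline{V} & \ra & \Cl(\overline{V}-\nu^{-1}(\Sigma)) & \ra & 0 \\
\downarrow & & \downarrow & & \downarrow & & \\
\bigoplus_{D'}\Z & \ra & \Cl\overline{W} & \ra & \Cl(\overline{W}-\nu^{-1}(\Sigma)) & \ra & 0
\end{array}
\]
where $D$ ranges over the codimension one irreducible components of $\nu^{-1}(\Sigma)$ and $D'$ over those of $\nu^{-1}(\Sigma)\cap\overline{W}$. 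Each such $D$ has $\dim D\ge 2$, so for general $H$ the intersection $D\cap\overline{W}$ is an irreducible divisor of $\overline{W}$ by Bertini, and these exhaust the $D'$; hence the left vertical map is surjective. A short diagram chase --- using this surjectivity, the right-exactness of the two rows, and the conclusion just obtained for the middle map --- then yields the same conclusion for the right vertical map, which is the assertion of the proposition. The step I expect to require the most care is the invocation of \cite{rs1}: its cleanest form concerns the \emph{complete} linear system of an ample base-point free line bundle on a normal projective variety, whereas the relevant system on $\overline{V}$ here is only a base-point free \emph{sub}system of $|L|$, so one must check that both the Bertini-normality statement and the Grothendieck--Lefschetz argument survive passage to a general member of such a subsystem; the normalization identification and the excision chase are then formal by comparison.
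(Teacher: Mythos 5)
Your proposal follows essentially the same route as the paper's proof: normalize $V$, observe that $V-\Sigma$ and $W-\Sigma$ are untouched by normalization since $\Sigma$ contains the non-normal locus, apply the Ravindra--Srinivas theorem to the normalization and its general hyperplane section, and descend to the complements of the preimage of $\Sigma$ via the excision sequence on divisorial components and a diagram chase. The only cosmetic differences are that the paper checks normality of $W-\Sigma$ directly via $S_2$ plus regularity in codimension one rather than by Bertini on the normalization of $W$, and packages your chase as a Snake Lemma argument after replacing the free groups by their images; the subsystem-versus-complete-system point you rightly flag is handled there by viewing the section as a general hyperplane section of the finite morphism from the normalization to $\P^N$.
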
 

\begin{proof}
Since $W = V \cap H$ is a Cartier divisor, it satisfies Serre's condition $S_2$ 
along $W - \Sigma$ by generality of $H$. Moreover, since $\sing W = \sing V \cap W$ in this setting, 
$W - \Sigma$ is regular in codimension one (because $V - \Sigma$ is), hence 
$W - \Sigma$ is normal. 

Now consider the normalization map $g: \widehat V \to V$. Let $B_j$ be the 
divisorial components of $\Sigma$ and let $B_{j,k} \subset \widehat V$ be those components of $g^{-1}(B_j)$ that 
map onto $B_j$. Note that the $B_{j,k}$ are all the divisorial components 
contained in $g^{-1}(\Sigma)$ because the normalization map $g$ is a finite 
morphism. Composing with the closed immersion $f: V \hookrightarrow \mathbb P^N$ 
defined by $\O_V (1)$, $f \circ g$ is also finite. We can view $g^{-1}(W)=\widehat W$ as 
a general hyperplane section of the map $\widehat V \to \mathbb P^N$. As such, 
Bertini's theorem tells us that $\widehat W \cap B_{j,k}$ give all the divisorial 
components of $g^{-1}(\Sigma) \cap \widehat W$. 

Now we apply the theorem of Ravindra and Srinivas \cite{rs1}, which states that 
the restriction map $r: \Cl \widehat V \to \Cl \widehat W$ is an isomorphism if 
$\dim \widehat V > 3$ and injective with finitely generated cokernel if 
$\dim \widehat V = 3$. 
This map takes the $B_{j,k}$ to $\widehat W \cap B_{j,k}$. 
Applying the standard exact sequence 
\cite[II, Prop. 6.5]{AG} we obtain the commutative diagram 
\begin{equation}
\begin{array}{ccc}
\bigoplus \mathbb Z  & = & \bigoplus \mathbb Z \\
\downarrow \alpha & & \downarrow \beta \\
\Cl \widehat V & \hookrightarrow & \Cl \widehat W \\ 
\downarrow & & \downarrow \\
\Cl (\widehat V - \bigcup B_{j,k}) & \to & \Cl (\widehat W - \bigcup B_{j,k}) \\
\downarrow & & \downarrow \\ 
0 & & 0  
\end{array}
\end{equation} 
in which the map $\alpha$ is given by the $B_{j,k}$ and $\beta$ is given by the 
$\widehat W \cap B_{j,k}$. Looking at the top commutative rectangle, it is evident 
that the image of $\alpha$ maps isomorphically onto the image of $\beta$ under the 
middle horizontal map. Replacing $\bigoplus \mathbb Z$ with the respective images of $\alpha$ and $\beta$ in the diagram above and applying 
the Snake Lemma we see that the bottom map is injective with a finitely generated 
cokernel that is zero when $\dim \widehat V > 3$. 
Finally, the inclusions 
\[
V - \Sigma = \widehat V - g^{-1} (\Sigma) \subset \widehat V - \bigcup B_{j,k}
\]
and 
\[
W - \Sigma = \widehat W - g^{-1} (\Sigma) \subset \widehat W - \bigcup B_{j,k}
\]
differ by closed subsets of codimension $\geq 2$ and so do not affect the class groups, 
therefore the restriction map $\Cl (V - \Sigma) \cong \Cl (W - \Sigma)$ is an 
isomorphism if $\dim V > 3$ and injective with finitely generated cokernel 
if $\dim V = 3$.  
\end{proof}

Now we prove theorem \ref{two}: 

\begin{thm} Let $X \subset \mathbb P^N$ be a normal projective variety of dimension at 
least three and $Z \subset X$ be a closed subscheme satisfying the 
conditions of Theorem \ref{two}. Then the general hypersurface 
$Y \in |H^0(\I_Z (d))|$ is normal, and the natural map 
\[
\varphi: \bigoplus_{i=1}^s \mathbb Z \oplus \Cl X \to \Cl Y
\]
generated by $\supp Z_i$ and the restriction map $\Cl X \to \Cl Y$ 
is injective. Moreover, $\varphi$ is an isomorphism if $\dim X > 3$ and has 
finitely generated cokernel if $\dim X = 3$. 
\end{thm}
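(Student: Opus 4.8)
The plan is to prove the statement on the blow-up $f\colon\widetilde X\to X$ of diagram (\ref{blowup}) and then descend. Write $\mathbb P=\mathbb P H^0(X,\I_Z(d))^*$ and $\sigma\colon\widetilde X\hookrightarrow\mathbb P$ for the closed immersion, which exists because $\I_Z(d-1)$ is generated by sections; then $\O_{\widetilde X}(1):=\sigma^*\O_{\mathbb P}(1)=f^*\O_X(d)\otimes\O_{\widetilde X}(-E)$ is very ample. A section $F\in H^0(X,\I_Z(d))$ corresponds to a hyperplane $H_F\subset\mathbb P$, and the hyperplane section $\widetilde Y:=\sigma^{-1}(H_F)$ is the strict transform of $Y=\{F=0\}$; for general $F$, $\widetilde Y$ is a general member of $|\O_{\widetilde X}(1)|$. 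I would apply Proposition \ref{normaladjustment} with $V=\widetilde X$, the very ample $\O_{\widetilde X}(1)$, and $\Sigma=f^{-1}(B)$, which contains the non-normal locus of $\widetilde X$ by Proposition \ref{properties}(4). This gives, for general $\widetilde Y$, that $\widetilde Y-f^{-1}(B)$ is normal and that restriction $\rho\colon\Cl(\widetilde X-f^{-1}(B))\to\Cl(\widetilde Y-f^{-1}(B))$ is injective, an isomorphism when $\dim X>3$, and of finitely generated cokernel when $\dim X=3$. Since Proposition \ref{free} identifies the source with $\bigoplus_{i=1}^s\mathbb Z\oplus\Cl X$ via $(a_i,L)\mapsto\sum a_iE_i+f^*L$, it remains to identify the target with $\Cl Y$ compatibly with $\varphi$.

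The crux is that blowing up $Y$ along $Z$ changes nothing away from $B$. By Remark \ref{xym} the ideal of $Z$ near a point of $Z-B$ is $(x,y^m)$ in local coordinates on the (there smooth) variety $X$, with $\supp Z_i=V(x,y)$; a general $F\in H^0(X,\I_Z(d))$ is locally $F=ux+vy^m$ with $u,v$ not simultaneously zero on $Z-B$, and a chart computation in $\widetilde X=\Proj\bigoplus\I_Z^n$ shows that the strict transform $\widetilde Y$ maps isomorphically onto $Y$ near such a point, carrying $E_i$ to $\supp Z_i$ with multiplicity one. Since $f$ is an isomorphism over $X-Z$, we obtain an isomorphism $f|_{\widetilde Y}\colon\widetilde Y-f^{-1}(B)\xrightarrow{\ \sim\ }Y-B$ sending $E_i\cap\widetilde Y$ to $\supp Z_i$ and $f^*L|_{\widetilde Y}$ to $L|_Y$. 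As $\codim(B,X)\ge3$ (Remark \ref{codimB}), we have $\codim(B,Y)\ge2$, so $\Cl(Y-B)=\Cl Y$; thus the composite
\[
\bigoplus_{i=1}^s\mathbb Z\oplus\Cl X\xrightarrow{\ \sim\ }\Cl(\widetilde X-f^{-1}(B))\xrightarrow{\ \rho\ }\Cl(\widetilde Y-f^{-1}(B))=\Cl(Y-B)=\Cl Y
\]
is the map $\varphi$, and $\varphi$ inherits from $\rho$ injectivity (resp.\ bijectivity when $\dim X>3$, resp.\ finiteness of cokernel when $\dim X=3$).

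Finally, $Y$ is normal: it is regular in codimension one because $Y-Z$ is a general member of the base-point-free system that $H^0(X,\I_Z(d))$ cuts out on the normal variety $X-Z$ (hence normal by Bertini), because the local equation $ux+vy^m$ presents a $Y$ that is regular along the $Z_i$ in codimension one, and because $\codim(B,Y)\ge2$ removes the remaining locus; and $Y$ satisfies $S_2$ via the structure sequence $0\to\O_X(-d)\to\O_X\to\O_Y\to0$ and the normality (hence $S_2$) of $X$. I expect the descent isomorphism $\widetilde Y-f^{-1}(B)\cong Y-B$ — the mechanism by which Proposition \ref{normaladjustment}, proved upstairs, is read off on $Y$ — together with the verification that $Y$ is genuinely normal at points lying over $B$, to be the only places where real care is required; the rest is a formal assembly of Propositions \ref{free} and \ref{normaladjustment}.
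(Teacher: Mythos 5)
Your overall strategy is the paper's: blow up $Z$, apply Proposition \ref{normaladjustment} to $\widetilde X$ and the strict transform $\widetilde Y$, identify the source via Proposition \ref{free}, and descend to $Y$. But there is a genuine gap in the descent step. You claim that for general $F$ the local expression $F=ux+vy^m$ has $u,v$ never simultaneously zero on $Z-B$, so that $f\colon \widetilde Y - f^{-1}(B)\to Y-B$ is an isomorphism. This is false precisely in the main case $\dim X\ge 4$: the locus $\{u=v=0\}\cap Z$ is the vanishing locus of a section of a rank-two bundle on $Z$ (essentially $N_{Z/X}^{\vee}(d)$ at smooth points), hence is generically nonempty of codimension $2$ in $Z$ once $\dim Z\ge 2$. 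For example, a general hypersurface in $\mathbb P^4$ containing a smooth surface $Z$ is singular at finitely many points of $Z$, and over each such point the hyperplane $H$ contains the whole fiber line $f^{-1}(z)$, so $\widetilde Y\to Y$ contracts a $\mathbb P^1$ there and is not an isomorphism over $Z-B$. (Your normality argument survives, since that bad locus has codimension $\ge 2$ in $Y$, but the identification $\Cl(\widetilde Y - f^{-1}(B))=\Cl(Y-B)$ does not.)

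The repair is what the paper does: let $A\subset Z$ be the closed set of points over which $H$ contains the fiber; since $H$ meets the generic fiber over each $Z_i$ transversely, $A$ is a proper closed subset of $Z$, hence has codimension $\ge 2$ in $Y$ and $\ge 3$ in $X$, and $f^{-1}(A)$ (a $\mathbb P^1$-bundle over $A$ away from $B$) has codimension $\ge 2$ in $\widetilde X$. Apply Proposition \ref{normaladjustment} with $\Sigma=f^{-1}(A\cup B)$ rather than $f^{-1}(B)$. Then $\Cl(\widetilde X - f^{-1}(A\cup B))\cong\Cl(\widetilde X - f^{-1}(B))$ because $f^{-1}(A)$ has codimension $\ge 2$ in $\widetilde X$, and $\Cl(\widetilde Y - f^{-1}(A\cup B))\cong\Cl(Y-(A\cup B))\cong\Cl Y$ because $f$ is an isomorphism off $f^{-1}(A\cup B)$ and $A\cup B$ has codimension $\ge 2$ in $Y$. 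Note that you cannot instead pass from $\Cl(\widetilde Y - f^{-1}(B))$ to $\Cl(\widetilde Y - f^{-1}(A\cup B))$ after the fact: $f^{-1}(A)\cap\widetilde Y$ contains whole fibers over $A$ and so can be a divisor in $\widetilde Y$, which is why $A$ must be excised before Proposition \ref{normaladjustment} is invoked.
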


\begin{proof} 
We maintain the notation of the previous section, with $B$ denoting the bad locus.
Since $Y$ is a Cartier divisor on the normal variety $X$, it satisfies Serre's 
condition $S_2$; thus to prove the first statement it suffices to show that $Y$ 
is regular in codimension one. While this can be seen directly, we will prove it 
by comparing $Y$ to its strict transform $\widetilde Y \subset \widetilde X$ in order to 
draw the remaining conclusions. The general hypersurface $H \subset \mathbb P H^0(X,\I_Z (d))^*$ intersects $\widetilde X$ in an irreducible Cartier 
divisor $\widetilde Y$ that agrees with $Y$ away from $E$, 
hence $\widetilde Y = \sigma^*(H) \cap \widetilde X$ is the strict transform of its image 
$Y \subset X$. 
Such an $H$ intersects the general 
fiber $f^{-1}(z) \cong \mathbb P^1$ transversely in a reduced point 
(see Prop. \ref{properties}), so the projection $f: \widetilde Y \to Y$ is an 
isomorphism away from $E$ and a generic isomorphism along $f^{-1}(Z-B)$. 
In particular, there is a closed subset $A \subset Z$ of codimension 
$\geq 2$ in $Y$ such that the restriction map $f:\widetilde Y - f^{-1}(A \cup B) \to Y - (A \cup B)$ 
is an isomorphism. 
Since $\widetilde X - f^{-1}(B)$ is normal, so is $\widetilde Y - f^{-1}(B)$ by 
Bertini's theorem and hence $Y - (A \cup B)$ is normal. 
In particular, $Y$ is regular in codimension one. 

To complete the argument, we apply Proposition \ref{normaladjustment} above with 
$V=\widetilde X, W=\widetilde Y$ and $\Sigma = f^{-1}(A \cup B)$ to see that 
$\Cl (\widetilde X - f^{-1}(A \cup B)) \to \Cl (\widetilde Y - f^{-1}(A \cup B))$ 
is an isomorphism for general $\widetilde Y$ if $\dim X > 3$ and an injection with finitely generated cokernel if $\dim X = 3$. Since $\codim (A,X) \geq 3$ 
and $\codim (f^{-1}(A),\widetilde X) \geq 2$ 
we have
\[
\begin{array}{ccccc}
\Cl (\widetilde X - f^{-1}(A \cup B)) & \cong & \bigoplus \mathbb Z E_i \oplus \Cl X & \cong & \bigoplus \mathbb Z Z_i \oplus \Cl X \\
\rotatebox[origin=c]{270}{$\lhook\joinrel\relbar\joinrel\rightarrow$} & & & & \rotatebox[origin=c]{270}{$\lhook\joinrel\relbar\joinrel\rightarrow$} \,\beta \\
\Cl (\widetilde Y - f^{-1}(A \cup B)) & \cong & \Cl (Y - (A \cup B)) & \cong & \Cl Y
\end{array}
\]
and the upper left horizontal isomorphism is Proposition \ref{free}. Hence $\beta$ is an isomorphism for 
$\dim X > 3$ and has finitely generated cokernel if $\dim X = 3$ by Proposition \ref{normaladjustment}.
\end{proof}

\section{Completions of UFDs} 

In~\cite[text after Thm. 1.2]{BN4}, we expressed the expectation that the completion 
of any local geometric complete intersection ring is the completion of a UFD. 
In this section, we verify this expectation for rings of dimension $\ge 3.$ The argument 
is similar to our argument for hypersurfaces, bolstered by 
(a) Corollary \ref{one} and (b) the following result, which extends 
a result of Ruiz \cite[Lemma]{ruiz}. Notationally 
${\bf x} = (x_1, x_2, \dots, x_n)$ as a vector and 
$\fm=(x_1, \dots, x_n)$ is the maximal ideal in the power series ring.  

\begin{prop}\label{analisom}
Let $f_1, \dots, f_q \in \mathbb C [[{\bf x}]]$ with $q \leq n$ 
and let $J$ be the ideal generated by the $q$-minors of the matrix 
$\left(\frac{\partial f_i}{\partial x_j}\right)$. If 
$g_1, \dots, g_q \in \mathbb C [[{\bf x}]]$ satisfy $f_i-g_i \in \fm J^2$ 
and $J \subset \fm$, 
then there is an automorphism $\phi$ of $\mathbb C [[{\bf x}]]$ with 
$\phi(f_i)=g_i$ for $1 \leq i \leq q$. 
\end{prop}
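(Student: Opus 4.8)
The plan is to prove this by a Newton-type iteration, constructing the automorphism $\phi$ as an infinite composition of automorphisms $\phi_k$ of $\mathbb C[[{\bf x}]]$, each close to the identity, so that the partial compositions $\phi_{(k)} = \phi_k \circ \cdots \circ \phi_1$ satisfy $\phi_{(k)}(f_i) \equiv g_i \pmod{\fm^{N_k} J^2}$ for a rapidly increasing sequence of exponents $N_k$. The convergence in the $\fm$-adic topology of $\mathbb C[[{\bf x}]]$ then produces the desired $\phi$ with $\phi(f_i) = g_i$ on the nose. The engine of the iteration is the following infinitesimal statement: if $h_i := f_i - g_i \in \fm^m J^2$ for some $m \geq 1$, then one can find an automorphism $\psi$ of the form $\psi(x_j) = x_j + \epsilon_j$ with $\epsilon_j \in \fm^{m} J$ (or $\fm^{m+1}$, whatever bookkeeping is cleanest) such that $\psi(g_i) - g_i \equiv h_i \pmod{\fm^{m+1} J^2}$ — or better, so that $f_i - \psi(g_i)$ lands in a strictly deeper power of $\fm$ times $J^2$. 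Iterating and tracking the exponents carefully gives the quadratic (or at least $\fm$-adically convergent) improvement.

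The key computation behind the infinitesimal step is a first-order Taylor expansion: for $\epsilon = (\epsilon_1, \dots, \epsilon_n)$ with small entries,
\[
\psi(g_i) = g_i + \sum_{j=1}^n \frac{\partial g_i}{\partial x_j}\, \epsilon_j + (\text{higher order in } \epsilon).
\]
Since $f_i - g_i \in \fm J^2 \subseteq \fm J$, the Jacobian matrices $\left(\frac{\partial f_i}{\partial x_j}\right)$ and $\left(\frac{\partial g_i}{\partial x_j}\right)$ agree modulo $J$ (differentiating $\fm J^2$ drops at most one factor, leaving something in $\fm J$, hence in $J$), so the ideal of $q$-minors of the $g$-Jacobian still equals $J$ modulo $J^2$, and in particular contains $J \bmod J^2$. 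Thus one needs to solve, modulo the appropriate power of $\fm$, the linear system $\sum_j \frac{\partial g_i}{\partial x_j} \epsilon_j = h_i$ with $h_i \in \fm^m J^2$; since $h_i$ lies in $\fm^m J \cdot J$ and $J$ is (mod $J^2$) generated by the $q$-minors, Cramer's rule lets us write each required contribution as a combination of those minors with coefficients, producing $\epsilon_j \in \fm^m J$. The factor $J^2$ in the hypothesis (rather than just $J$) is exactly what is needed: it gives one factor of $J$ to absorb into $\epsilon_j$ via Cramer and one factor of $J$ to keep the residual in $J^2$ after accounting for the quadratic error terms $\psi(g_i) - g_i - \sum_j \frac{\partial g_i}{\partial x_j}\epsilon_j$, which lie in $(\fm^m J)^2 \subseteq \fm^{m+1} J^2$ since $J \subseteq \fm$.

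The main obstacle I anticipate is the bookkeeping: one must simultaneously track the $\fm$-adic depth (to guarantee convergence of the composition $\phi = \cdots \circ \phi_2 \circ \phi_1$) and the presence of the factor $J^2$ (to keep feeding the iteration and, at the limit, land exactly at $g_i$ rather than merely modulo some ideal). One has to verify that composing all the $\psi$'s gives a well-defined continuous automorphism — each sends $x_j \mapsto x_j + (\text{element of } \fm^{m_k})$ with $m_k \to \infty$, so the infinite composite converges and is invertible since it is the identity modulo $\fm^2$. A secondary subtlety is that after the first step the Jacobian ideal of the new target $\psi(g_i)$ could a priori change, so one should check that it remains $J$ (again using that the modification lies in $\fm J$, so differentiating stays inside $J$ modulo $J^2$, which suffices to run Cramer's rule at the next stage). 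Once these are pinned down, comparing with Ruiz's lemma (the case $q=1$, where $J$ is the Jacobian ideal of a single function) shows this is the natural several-function generalization.
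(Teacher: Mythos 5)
Your plan is correct in outline, and it is in substance the paper's own argument with its main black box opened. The paper sets $F_i({\bf x},{\bf y})=f_i({\bf x}+{\bf y})-g_i({\bf x})$, notes $\frac{\partial F_i}{\partial y_j}({\bf x},{\bf 0})=\frac{\partial f_i}{\partial x_j}$, and invokes Tougeron's implicit function theorem \cite{tougeron} to produce $y_j({\bf x})\in\fm J$ with $f_i({\bf x}+{\bf y}({\bf x}))=g_i({\bf x})$; the only step it then carries out by hand is the one you also make at the end, namely that $y_i\in\fm J\subseteq\fm^2$ forces the linear part of $x_i\mapsto x_i+y_i({\bf x})$ to be the identity, so the substitution is an automorphism. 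The Newton--Cramer iteration you describe --- solve the linearized system using the adjugates of the $q\times q$ submatrices, spending one factor of $J$ from $J^2$; absorb the quadratic error $(\fm^m J)^2\subseteq\fm^{m+1}J^2$; pass to the $\fm$-adic limit --- is precisely the proof of Tougeron's theorem, so citing it outright buys you all the bookkeeping you flag as the main obstacle, while your version buys a self-contained argument. One detail to repair if you do write the iteration out: the minors of the $g$-Jacobian do not obviously ``equal $J$ modulo $J^2$.'' What $f_i-g_i\in\fm J^2$ actually gives is that the two Jacobians agree entrywise modulo $\fm J+J^2\subseteq\fm J$, hence the perturbed minors generate $J$ modulo $\fm J$; it is then Nakayama's lemma (not a mod-$J^2$ computation) that shows they generate all of $J$, which is the statement you need to keep Cramer's rule running at every stage of the iteration.
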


\begin{proof}
Introducing new indeterminates ${\bf y}=(y_1, \dots, y_n)$, define 
$F_i({\bf x}, {\bf y})=f_i({\bf x}+{\bf y})-g_i({\bf x})$ for 
$1 \leq i \leq q$, so that $\frac{\partial F_i}{\partial y_j}({\bf x},{\bf 0}) = \frac{\partial f_i}{\partial x_j}$ 
for all $i \leq q$ and $j \leq n$. 
According to Tougeron's implicit function theorem \cite[Prop. 1 {\em ff.}]{tougeron}, there exist $y_j({\bf x}) \in \fm J$ for $1 \leq j \leq n$ such that 
$F_i(\mathbf x, \mathbf y(\mathbf x)) = 0$ for $1 \leq i \leq q$, 
that is $f_i(\mathbf x + \mathbf y(\mathbf x)) = g_i(\mathbf x)$.

The appropriate change of variables is thus $h_i:=x_i \mapsto x_i+y_i(\mathbf x), i=1,\dots, n$. 
In fact, $\frac{\partial h_i}{\partial x_j} = \frac{\partial y_i}{\partial x_j}$ for $i\neq j$ and 
$\frac{\partial h_i}{\partial x_i} = 1 + \frac{\partial y_i}{\partial x_i}$. 
Since $y_i \in \fm J \subseteq \fm^2$, $\frac{\partial y_i}{\partial x_j}(\mathbf 0) = 0$ for all $i,j$; 
so the determinant of the transition matrix at the origin is $1$ and thus $h$ defines an automorphism of $\C[[\mathbf x]]$.
\end{proof}

\begin{rmk}{\em
The case $q=1$ is due to Ruiz \cite{ruiz} and the proof above 
emulates his. We note that if $f_1, \dots, f_q$ do not define a 
complete intersection, then $J=0$ and the result also holds trivially, 
since necessarily $J=0$ and $g_i=0$ (and $\phi$ is the identity).  
\em}\end{rmk}

Now we are in position to prove that the completion $\hat A$ 
of the local ring $A$ of a normal complete intersection variety 
is the completion of a geometric UFD, at least if 
$\dim A \geq 3$, as we predicted in our earlier paper \cite{BN3}. 
While we fully expect this result when $\dim A = 3$ as well, 
our argument below requires the statement of Corollary \ref{one} 
with $r=1$, which will be more difficult (see following section). 

\begin{thm}\label{lciufd}
Let $A = \mathbb C [[x_1, \dots, x_n]]/(f_1, \dots, f_q)$, where the 
$f_i$ are polynomials defining a variety $V$ of codimension $q$ which 
is normal at the origin. Assume $\dim A \geq 3$. Then there exists 
a complete intersection $X \subset \mathbb P^n$ of codimension $q$ 
and a point $p \in X$ such that $R = \O_{X,p}$ is a UFD and $\hat R \cong A$.  
\end{thm}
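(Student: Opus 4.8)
\textbf{Proof proposal for Theorem \ref{lciufd}.}

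The plan is to realize the abstract completion $A$ as the completion of a local ring on a suitable complete intersection in $\mathbb P^n$, and then arrange that this local ring is a UFD by a Noether--Lefschetz argument. First I would homogenize: the polynomials $f_1, \dots, f_q$ cut out an affine variety, and I would consider their closures to get a projective complete intersection $\bar X \subset \mathbb P^n$, with $p$ the point corresponding to the origin; since $V$ is normal at the origin and normality is an open condition, after possibly shrinking we may assume $\O_{\bar X, p}$ is normal with $\widehat{\O_{\bar X, p}} \cong A$ (this is the standard Artin-approximation-free identification, since completing the local ring of $V$ at the origin gives exactly $A$). The issue is that $\bar X$ itself need not be a UFD at $p$, so I must \emph{modify} the defining equations without changing the completion at $p$, so as to kill the class group locally.

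The key mechanism is Proposition \ref{analisom}: if I replace $f_i$ by $g_i$ with $f_i - g_i \in \fm J^2$ (where $J$ is the Jacobian ideal of the $f_i$ and $J \subset \fm$, which holds since $p$ is a singular-or-not point of the complete intersection and we are working in the local ring at $p$), then there is an automorphism $\phi$ of $\mathbb C[[\mathbf x]]$ carrying $(f_i)$ to $(g_i)$, hence $\mathbb C[[\mathbf x]]/(f_i) \cong \mathbb C[[\mathbf x]]/(g_i)$. So I have enormous freedom to perturb the $f_i$ by high-order terms lying in $\fm J^2$ while preserving the completion $A$. The strategy is to choose the $g_i$ of the form $f_i + (\text{generic element of } \fm J^2 \cap H^0(\mathbb P^n, \I_{W}(d_i)))$ for suitable degrees $d_i$ and a suitable auxiliary subscheme $W$, so that the resulting complete intersection $X = V(g_1, \dots, g_q) \subset \mathbb P^n$ is a \emph{general} complete intersection of its multidegree containing some fixed base locus supported away from $p$. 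Then Corollary \ref{one} (equivalently Corollary \ref{original}, applied $q$ times by taking hyperplane-type sections one at a time inside $\mathbb P^n$) forces $\Cl X$ to be generated by $\O_X(1)$ together with the supports of the fixed codimension-two components we have control over; by choosing the base locus to \emph{have no codimension-two components in $X$} — or more precisely choosing the perturbation generically with empty relevant base locus — we get $\Cl X \cong \mathbb Z$. A complete intersection in $\mathbb P^n$ with $\Cl X \cong \mathbb Z \cdot \O_X(1)$ and $\dim X \geq 3$ is locally factorial (its local rings are UFDs: a local ring of a normal variety is a UFD iff its class group vanishes, and here the global class group $\mathbb Z$ restricts to $0$ in the local ring since $\O_X(1)$ becomes trivial locally), so in particular $R = \O_{X,p}$ is a UFD, while $\widehat R \cong A$ by construction via Proposition \ref{analisom}.

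The main obstacle, and the step requiring the most care, is the \emph{compatibility of the two constraints}: the perturbations allowed by Proposition \ref{analisom} must lie in $\fm J^2$ (a condition at the single point $p$, forcing the perturbation to vanish to high order there), while the Noether--Lefschetz input of Corollary \ref{one} requires the $g_i$ to be \emph{general} among sections of an ideal sheaf $\I_Z(d)$ with $\I_Z(d-1)$ globally generated and $Z$ satisfying hypotheses (a),(b). I would reconcile these by taking $Z$ to be (roughly) the scheme defined by $\fm_p^{\,k} \cdot (\text{product of Jacobian-type ideals})$ for $k$ large — i.e. impose vanishing to high order at $p$ as the base locus $Z$ itself — noting that such a $Z$ is supported at the single point $p$, hence has \emph{no} codimension-two components as long as $n \geq 3$, so Corollary \ref{one}(second statement) gives $\Cl X \cong \Cl(\mathbb P^n) = \mathbb Z$ directly; one must check that for $d_i$ large enough $\I_Z(d_i - 1)$ is globally generated (Castelnuovo--Mumford regularity of a scheme supported at a point) and that a general such complete intersection is normal of dimension $\dim A$ and contains $p$ with the correct completion, the last point being exactly where Proposition \ref{analisom} enters to say that ``general high-order perturbation at $p$'' does not disturb $A = \widehat{\O_{X,p}}$. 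Checking that $\dim A \geq 3$ (not just $\geq 4$) suffices is legitimate here precisely because $X$ lives in $\mathbb P^n$ and we may apply Corollary \ref{original}, which is stated for all $n \geq 3$; I would highlight that the dimension-three case of the UFD statement rides for free on the $\mathbb P^n$ case of Theorem \ref{two}, exactly as the remark preceding the theorem anticipates.
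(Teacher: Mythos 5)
Your overall architecture --- perturb the $f_i$ within $\fm J^2$ via Proposition \ref{analisom} so that the completion at $p$ is unchanged, then invoke Corollary \ref{one} to force $\Cl X \cong \mathbb Z$ and hence $\Cl \O_{X,p}=0$ --- is exactly the paper's, but your construction of the base locus has a genuine gap. You propose a $Z$ supported at the single point $p$ (a fat point, or ``$\fm_p^k$ times Jacobian-type ideals''; note that the vanishing locus of a product ideal is the \emph{union} $\{p\}\cup V(J)$, not just $p$). With a punctual $Z$, a general member of $|H^0(\I_Z(d))|$ bears no relation to the $f_i$, so the completion at $p$ is not $A$; and even if you enlarge $Z$ to the scheme cut out by $(f_1,\dots,f_q)+\fm_p^k$, so that the general local equations become $f_i+h_i$ with $h_i$ general in $\fm_p^k$, the needed containment $\fm_p^k\subset \fm J^2$ holds only when $J$ is $\fm$-primary, i.e.\ only when $V$ has an isolated singularity at the origin. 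If $\sing V$ has a positive-dimensional component through $p$ --- perfectly consistent with normality at $p$ once $\dim A\ge 4$ --- then no power of $\fm_p$ lies in $\fm J^2$, Proposition \ref{analisom} does not apply, and a general such perturbation genuinely changes the analytic germ (it will, for instance, alter the singular locus near $p$). You flag ``compatibility of the two constraints'' as the delicate step, but the mechanism you offer resolves it only in the isolated-singularity case, which is already Parameswaran--Srinivas.

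The paper's resolution is to make the base locus follow the singular locus rather than the point: write the Jacobian ideal as $J=K\cap L$, where $K$ collects the primary components whose associated primes contain $\fm$ and $L$ the rest, so that $K_\fm=J_\fm$, and take the base locus $Y$ to be cut out by $(f_1,\dots,f_q,k_1^3,\dots,k_r^3)$ with $K=(k_1,\dots,k_r)$. This $Y$ is supported on the components of $\sing V$ through the origin, hence has codimension $\ge q+2$ in $\mathbb P^n$ by normality of $V$ at $p$, so Corollary \ref{one} applies with no codimension-two components and gives $\Cl X\cong\mathbb Z$, killed in $\Cl\O_{X,p}$. Crucially, the $f_i$ themselves lie in this linear system, so a general member has local equations $\sum a_{ij}f_j+\sum b_{ij}k_j^3$ with $(a_{ij})$ invertible, which after normalization become $h_i=f_i+\sum c_{ij}k_j^3$ with $f_i-h_i\in K^3=J^3\subset\fm J^2$ in the completion --- exactly what Proposition \ref{analisom} requires. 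Two smaller points: a ``general element of the affine translate $f_i+\Lambda$'' is not a general member of a linear system in the sense required by Corollary \ref{one} unless $f_i$ belongs to the system, which again forces the base locus to contain the relevant piece of $V$; and your claim that $\dim A\ge 3$ ``rides for free'' overstates matters --- the paper's own remark concedes that the argument leaves open the case of a $3$-dimensional $A$ with a $1$-dimensional singular locus, precisely because Corollary \ref{one} with $r=1$ is not available.
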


\begin{proof}
Let $f_1, \dots, f_q \in \mathbb C [x_1,x_2, \dots, x_n]$ define a variety $V$ 
which is normal at the origin, corresponding to the maximal ideal 
$\fm = (x_1, \dots, x_n)$. 
The singular locus $D$ is given by the ideal $(f_1, \dots, f_q) + J$, 
where $J$ is the ideal generated by the $q$-minors of the 
partial derivatives matrix 
$\left( \frac{\partial f_i}{\partial x_j} \right)$. 
Use primary decomposition in the ring $\mathbb C [x_1, \dots, x_n]$, 
to write 
\[
J = \bigcap_{p_i \subset \fm} q_i \cap \bigcap_{p_i \not \subset \fm} q_i = K \cap L
\]
where $q_i$ is $p_i$-primary and we have sorted into components that meet 
the origin and those that do not. Localizing at $\fm$ we find that 
$J_\fm = K_\fm$ because $L_\fm = (1)$; for example, this is clear if we 
localize the exact sequence 
\[
0 \to K \cap L \to K \oplus L \to K+L \to 0.
\]
Now if $K = (k_1, \dots, k_r)$ gives a polynomial generating set for $K$, 
the closed subscheme $Y$ defined by the ideal 
$I_Y=(f_1, \dots, f_q, k_1^3, \dots, k_r^3)$ 
is supported on the components of the singular locus of $V$ which 
contain the origin, hence has codimension $\geq q+2$ by normality of $V$ at 
the origin. The very general codimension $q$ 
complete intersection $X$ containing $Y$ satisfies $\Cl X = 0$ by Corollary \ref{one} 
and therefore $\Cl \O_{X,p}=0$ as well since the natural restriction map 
$\Cl X \to \Cl \O_{X,p}$ is surjective, hence $\O_{X,p}$ is a UFD. The local equations for $X$ have the form  
\[
g_i = \sum a_{i,j} f_j + b_{i,1} k_1^3 + \dots + b_{i,r} k_r^3
\]
for general units $a_{i,j}$ and $b_{i,j}$. In particular the matrix 
$\left( a_{i,j} \right)$ is invertible and we can multiply to obtain 
generators of the form 
\[
h_i = f_i + \sum c_{i,j} k_j^3 
\]
so that $f_i-h_i \in K^3$. 
Since $K_\fm = J_\fm$, the completions of these ideals are equal in $\mathbb C [[x_1, \dots, x_n]]$, 
so $f_i-h_i \in K^3 = J^3 \subset \fm J^2$ and  
$$\widehat \O_{X,p} = \C[[x_1, \dots, x_n]]/(g_i) = \C[[x_1, \dots, x_n]]/(h_i) \cong 
\C[[x_1, \dots, x_n]]/(f_i) = A$$ 
by Proposition \ref{analisom}.
\end{proof}

\begin{rmk}{\em
Srinivas~\cite[Questions 3.1 and 3.7]{srinivas} posed the question which ``geometric" local rings ({\em i.e.}, localizations of $\C$-algebras of finite type) are analytically isomorphic to geometric UFDs. Parameswaran and Srinivas show that the answer is affirmative for the local ring of an isolated local complete intersection singularity~\cite{parasrini}. Grothendieck~\cite[X, Cor. 3.14]{SGA} proves Samuel's conjecture that any local ring that is a complete intersection and factorial in codimension $\le 3$ is a UFD. Theorem~\ref{lciufd} settles the question for all normal local complete intersection rings save one case, namely that in which the ring has dimension $3$ and has a $1$-dimensional singularity.
\em}\end{rmk}

\section{Remarks on threefolds}\label{threefolds}

From the long history of Noether-Lefschetz theory \cite{BN3}, one expects some version of 
Theorem \ref{two} to hold when $\dim X = 3$, with a stronger hypothesis (some positivity 
condition on $K_Y$) and a weaker conclusion (the conclusion should hold for {\it very general} 
$Y$ instead of Zariski general $Y$). In this section we compare two results when there is no 
base locus and prove a positive result in the simplest case when $X$ is smooth and the base 
locus is a smooth curve. 

In the literature, the best results resembling Theorem \ref{two} for $\dim X = 3$ 
and no base locus are the following: 

\begin{thm}\label{best}
Let $X$ be a normal threefold and $f:X \to \mathbb P^N$ be a morphism given by the linear system $V \subset H^0(X, \O_X (1))$, 
where $\O_X (1)$ is ample. Then the restriction map $\Cl X \to \Cl Y$ 
is an isomorphism for very general $Y \in |V|$ under the following conditions. 
\begin{enumerate}[(a)]
\item {\em (Moishezon \cite{moishezon})} $f$ is a closed immersion, $X$ is smooth and either 
$b_2(X) = b_2 (Y)$ or $h^2(Y,\O_Y)>h^2(X,\O_X)$. 
\item {\em {(Ravindra and Srinivas \cite{rs2})}} $f_* K_X (1)$ is generated by global sections. 
\item {\em (Ravindra and Tripathi \cite{rt})} The multiplication map $$H^0(X,\O_X (1)) \otimes H^0(X, K_X (1)) \to H^0(X,K_X (2))$$ is surjective and 
$H^1({\tilde X},\Omega^2_{\tilde X} \otimes \pi^*\O_X (1))=0$ for some desingularization $\pi:\tilde X \to X$.
\end{enumerate}
\end{thm}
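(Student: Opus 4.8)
The plan is to recall that each of the three statements (a), (b), (c) is proved in the cited reference, and to lay out the common Noether--Lefschetz strategy behind all three so that it is clear where the hypotheses enter. In each case one would first pass to a desingularization $\pi\colon \tilde X \to X$ (the identity when $X$ is smooth, as in (a)), replace $Y$ by its strict transform $\tilde Y$, and reduce the computation of $\Cl Y$ to showing that for very general $Y \in |V|$ the restriction $\Pic \tilde X \to \Pic \tilde Y$ is surjective. One then sets up the family $\{\tilde Y_t\}_{t\in B}$ over the open locus $B \subset |V|$ of parameters with $\tilde Y_t$ smooth, together with the associated variation of Hodge structure on the $H^2$ of the fibers.

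Next I would invoke the Lefschetz $(1,1)$ theorem: a class in $H^2(\tilde Y_t,\mathbb Z)$ is algebraic exactly when it is of type $(1,1)$, so the Noether--Lefschetz locus $\NL \subset B$ of parameters carrying a new algebraic class is a countable union of closed analytic subvarieties. The whole point is to show that under (a), (b), or (c) each component of $\NL$ is a \emph{proper} subvariety of $B$; then the very general member avoids all of them and $\Pic \tilde X \to \Pic \tilde Y$ is onto, which gives the claimed isomorphism on class groups. Properness would be checked infinitesimally: following Griffiths and Carlson--Green--Griffiths--Harris (and Green's reformulation), one identifies the Zariski tangent space to a component of $\NL$ at $t$ with the kernel of a cup-product/multiplication map built from the graded pieces of the relevant Hodge data of $\tilde X$, and surjectivity of that multiplication map forces every such tangent space, hence every component of $\NL$, to be proper.

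The three hypotheses are three ways of securing this surjectivity. For (c) it is imposed directly: surjectivity of $H^0(\O_X(1)) \otimes H^0(K_X(1)) \to H^0(K_X(2))$ is the multiplication map, and $H^1(\tilde X,\Omega^2_{\tilde X}\otimes\pi^*\O_X(1))=0$ is exactly the obstruction term that must vanish for the infinitesimal argument to close up when $\tilde X$ is not projective space (where that group vanishes automatically). For (b) one would feed global generation of $f_*K_X(1)$ into a Carlson--Griffiths-type description of the period/normal-function map to produce the needed surjectivity. For (a), Moishezon's topological conditions $b_2(X)=b_2(Y)$ or $h^2(Y,\O_Y)>h^2(X,\O_X)$ serve the same purpose by bounding the size of the primitive cohomology into which a new class could deform.

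The hard part, and the point where the three proofs genuinely diverge, is this last step: converting a positivity condition on $\O_X(1)$ into vanishing of the infinitesimal Noether--Lefschetz invariants when $X$ (equivalently $\tilde X$) is not $\mathbb P^N$, so that the Jacobian-ring machinery of Griffiths is unavailable and must be replaced by cohomological computations on $\tilde X$ in the spirit of Carlson--Griffiths. I will not reproduce these arguments here; they can be found in \cite{moishezon}, \cite{rs2}, and \cite{rt}.
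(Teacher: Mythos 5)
The paper offers no proof of Theorem~\ref{best}: it is a compendium of three results quoted from the literature, so your decision to defer to \cite{moishezon}, \cite{rs2}, and \cite{rt} is exactly what the authors do. Where your write-up goes astray is in presenting a single ``common Noether--Lefschetz strategy'' (Lefschetz $(1,1)$, the countable union of components of the Noether--Lefschetz locus, and the Green/Carlson--Green--Griffiths--Harris infinitesimal tangent-space computation) as the backbone of all three proofs. The paper's own remarks following the theorem make clear that the three arguments are methodologically quite different: Moishezon's proof of (a) uses Lefschetz's original topological machinery --- monodromy, Lefschetz pencils, and vanishing cycles --- rather than the infinitesimal variation of Hodge structure; Ravindra and Srinivas prove (b) by purely algebraic methods in the spirit of Grothendieck's proof of the Grothendieck--Lefschetz theorem in SGA2, via formal completions and infinitesimal neighborhoods, not via a Hodge-theoretic tangent-space bound on $\NL$; and Ravindra--Tripathi's (c) proceeds by lifting line bundles to infinitesimal neighborhoods in the manner of Joshi. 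Your framing is closest in spirit to (c) (and to the classical Green--Griffiths treatment of surfaces in $\mathbb P^3$), and the explanation of why ``very general'' appears (avoiding a countable union of proper closed subvarieties) is correct, but attributing the IVHS argument to (a) and (b) misrepresents the sources; in particular, for a normal threefold $X$ the passage from $\Cl Y$ to $\Pic\tilde Y$ of a strict transform is itself a nontrivial step that (b) sidesteps entirely by working algebraically with the class group. If you intend your paragraph as an expository gloss rather than a proof, it should be corrected on these points or simply omitted in favor of the bare citations.
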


\begin{rmks}{\em There are several points to be made about these theorems. 

\begin{enumerate}[(a)]
\item Moishezon proves his result with the original method of Lefschetz \cite{lefschetz}, using monodromy, Hodge theory, 
Lefschetz pencils and vanishing cycles (see Voisin's book \cite{HT2}). For $X$ smooth and $f$ a closed immersion, the conditions given {\it characterize} 
the conclusion, so in this sense the result is the best possible. 

\item The result of Ravindra and Srinivas is proved by purely algebraic methods in the spirit of Grothendieck's proof of 
the Grothendieck-Lefschetz theorem \cite{SGA}, using formal completions and infinitesimal neighborhoods. The threefold 
$X$ need not have a canonical divisor, but if $g: \widetilde X \to X$ is a desingularization, then the coherent sheaf 
$K_X=g_* K_{\widetilde X}$ is independent of $g$. 

\item When $X$ is smooth and $f$ is a closed immersion, the hypothesis $f_* K_X (1)$ globally generated implies 
$h^2(Y,\O_Y) > h^2(X, \O_X)$, or by duality $h^0(Y,K_Y) > h^1(X, K_X)$. The reason is that adjunction gives 
$K_Y = K_X (1)|_Y$ and hence there is an exact sequence 
\[
0 \to K_X \to K_X (1) \to K_Y \to 0 
\]
and by Kodaira vanishing the connecting homomorphism $H^0(Y,K_Y) \to H^1(X, K_X)$ is surjective, so it's enough that 
the map $H^0(X, K_X (1)) \to H^0(Y, K_Y)$ is nonzero, but if $K_X (1)$ is globally generated then the image of this 
map generates $K_Y$ as a sheaf. The converse is not true in general, see Example \ref{counter}(a) and (b) below. 

\item The recent result (c) of Ravindra and Tripathi is very similar in both statement and spirit to Joshi's variant of the 
Noether-Lefschetz theorem \cite{joshi}, the idea of the proof being to lift the line bundle to infinitesimal neighborhoods to achieve 
the surjectivity.
\end{enumerate} 
\em}\end{rmks}

In the most familiar setting, things work out very nicely:

\begin{prop}\label{nice}
Let $X \subset \mathbb P^N$ be a smooth threefold containing the 
smooth connected curve $Z$. If $K_X (d)$ and $\I_Z (d-1)$ are generated 
by global sections, then the very general degree $d$ surface $Y$ 
containing $Z$ is smooth and $\Pic Y \cong \mathbb Z \cdot Z \oplus \Pic X$.
\end{prop}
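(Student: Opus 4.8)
The plan is to mimic the structure of the proof of Theorem~\ref{two}, but with two crucial upgrades made available by the hypotheses: smoothness lets us work with $\Pic$ rather than $\Cl$, and the positivity of $K_X(d)$ lets us invoke the ``very general'' threefold results in place of Proposition~\ref{normaladjustment}. First I would set up the blow-up diagram (\ref{blowup}): since $\I_Z(d-1)$ is globally generated, $\sigma:\widetilde X \to \mathbb P H^0(X,\I_Z(d))^*$ is a closed immersion with $\sigma^*\O_{\mathbb P}(1) = f^*\O_X(d)\otimes\O_{\widetilde X}(-E)$. Because $X$ is smooth and $Z$ is a smooth curve, the bad locus $B$ is empty, so Proposition~\ref{properties} gives that $f:E\to Z$ is an honest $\mathbb P^1$-bundle, $E$ is a smooth irreducible divisor, and $\widetilde X$ is smooth (no singular points in the fibers, since the ideal is locally $(x,y)$). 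Thus $\widetilde X$ is a smooth projective threefold, $\widetilde Y$ is its general hyperplane section under $\sigma$, and by Bertini $\widetilde Y$ is a smooth surface agreeing with $Y$ away from $E$; transversality of $H$ with the $\mathbb P^1$-fibers makes $f:\widetilde Y \to Y$ an isomorphism, so $Y$ itself is a smooth surface containing $Z$.

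Next I would compute $\Pic\widetilde X$. The analogue of Proposition~\ref{free} in the smooth setting is the standard exact sequence for the blow-up of a smooth threefold along a smooth curve: $0 \to \Pic X \stackrel{f^*}{\to} \Pic\widetilde X \to \mathbb Z\cdot E \to 0$, and it splits because restricting $\O_{\widetilde X}(E)$ to a fiber $f^{-1}(z)\cong\mathbb P^1$ gives $\O(-1)$, which is nontrivial (again $-E$ is the relative $\O(1)$ of the $\mathbf{Proj}$ construction). Hence $\Pic\widetilde X \cong \Pic X \oplus \mathbb Z\cdot E$.

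Now I would invoke the Noether-Lefschetz theorem for threefolds via the result of Ravindra and Srinivas, Theorem~\ref{best}(b): it suffices to check that $(f\circ\sigma)_* K_{\widetilde X}\otimes\sigma^*\O_{\mathbb P}(1)$ is globally generated, i.e.\ that $K_{\widetilde X}\otimes f^*\O_X(d)\otimes\O_{\widetilde X}(-E)$ is globally generated on $\widetilde X$. Since blowing up a smooth curve on a smooth threefold gives $K_{\widetilde X} = f^* K_X \otimes \O_{\widetilde X}(E)$, this sheaf is exactly $f^*(K_X\otimes\O_X(d)) = f^*(K_X(d))$, which is globally generated precisely because $K_X(d)$ is and $f$ is surjective (pullback of a globally generated sheaf is globally generated). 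Applying Theorem~\ref{best}(b) to $\widetilde X$ with this linear system, the restriction map $\Pic\widetilde X \to \Pic\widetilde Y$ is an isomorphism for very general $\widetilde Y \in |\sigma^*\O_{\mathbb P}(1)|$, equivalently for very general $Y \ni Z$ of degree $d$. Composing with the isomorphisms $\Pic\widetilde Y \cong \Pic Y$ and $\Pic\widetilde X \cong \Pic X \oplus \mathbb Z\cdot E$, and tracking that $E$ restricts to (a class supported on) $Z$ and $f^*L$ restricts to $L|_Y$, we conclude $\Pic Y \cong \mathbb Z\cdot Z \oplus \Pic X$.

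The main obstacle is verifying that the hypothesis of Theorem~\ref{best}(b) is genuinely met on the blow-up---that is, the canonical bundle computation $K_{\widetilde X} = f^*K_X\otimes\O(E)$ combined with the twist by $\sigma^*\O_{\mathbb P}(1) = f^*\O_X(d)\otimes\O(-E)$ must cancel the $E$ exactly, leaving a pullback of the globally generated sheaf $K_X(d)$; this cancellation is the whole point, and it is why the hypothesis is stated as $K_X(d)$ (rather than, say, $K_X(d-1)$) globally generated. A secondary point requiring care is that the Ravindra--Srinivas theorem is stated for the coherent sheaf $K$ on a possibly singular threefold, so one should simply note $\widetilde X$ is smooth and this reduces to the usual canonical bundle. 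Everything else---Bertini smoothness, the isomorphism $f:\widetilde Y\to Y$ off $E$ and the identification of $E\cap\widetilde Y$ with $Z$, and the split exact sequence for $\Pic$---is routine.
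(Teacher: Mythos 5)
Your proposal is correct and follows essentially the same route as the paper's own proof: blow up $Z$, use global generation of $\I_Z(d-1)$ to get the closed immersion, observe that $K_{\widetilde X}\otimes\sigma^*\O_{\mathbb P}(1)\cong f^*(K_X(d))$ is globally generated, apply Theorem~\ref{best}(b) to get $\Pic\widetilde X\cong\Pic\widetilde Y$, and transport the splitting $\Pic\widetilde X\cong\mathbb Z E\oplus\Pic X$ (Proposition~\ref{free}) down to $Y$ via the isomorphism $\widetilde Y\to Y$. The only cosmetic difference is that you derive the splitting from the standard smooth blow-up sequence rather than citing Proposition~\ref{free}; the content is identical.
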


\begin{proof}
Let $\pi: \widetilde X \to X$ be the blow-up of $X$ at $Z$ with exceptional divisor $E$. Then $\widetilde X$ is smooth, and the map 
$f: \widetilde X \rightarrow \P^N = \mathbb P H^0(\I_Z (d))$ associated to the 
linear system given by the line bundle $f^* \O_{\P^N} (1) = \pi^* \O_X (d) -  E$ is a closed 
immersion by \cite[4.1]{PS} because $\I_Z (d-1)$ is generated by its sections.
Moreover, $K_{\widetilde X} = \pi^* K_X + E$ \cite[II, Ex. 8.5 (b)]{AG}, 
so 
\[
K_{\widetilde X} \otimes f^* \O_{\P^N} (1) \cong \pi^* K_X (d)
\]
is generated by 
global sections,
and therefore $\Pic \tilde X \to \Pic \tilde Y$ is an isomorphism 
for the very general $\tilde Y \in |\pi^* \O_X (d) - E|$ by Theorem \ref{best}(b). 
Now $\Pic \tilde X \cong \mathbb Z E \oplus \Pic X$ By Proposition \ref{free} 
and the projection $f: \tilde Y \to Y=f(\tilde Y)$ is an isomorphism, 
so associating $E \cap \tilde Y \subset \tilde Y$ with $Z \subset Y$ via 
the map $f$, we obtain the result. 
\end{proof} 

\begin{rmks}{\em A few comments on this proposition: 
\begin{enumerate}[(a)]
\item The same proof works for $Z$ a disjoint union of smooth curves $Z_i$, in which case $\Pic Y$ is the direct sum of $\Pic X$ and 
summands generated by the curve components of $Z$.

\item More generally, let $X \subset \mathbb P^N$ be a normal 
threefold with closed subscheme $Z \subset X$ satisfying the 
conditions of Theorem \ref{two}. Letting $\pi: \widetilde X \to X$ be 
the blow-up at $Z$ with exceptional divisor $E$, 
further assume that 
$K_X \otimes \O_{\widetilde X} (-E) \otimes \pi^*(\O_X (d))$ is generated 
by global sections. Then the conclusion of Theorem \ref{two} holds because 
we can apply Theorem \ref{best}(b)
in our proof of Proposition 
\ref{normaladjustment} to obtain an isomorphism $\Cl \tilde X \to \Cl \tilde Y$. 
\end{enumerate}
\em}\end{rmks}

\begin{rmks}\label{counter}{\em 
The method of Proposition \ref{nice} fails if $Z \subset X$ is a point. 
\begin{enumerate}[(a)]
\item Let $\pi :X \to \mathbb P^3$ be the blow-up at a point $p$ with 
exceptional divisor $E \cong \mathbb P^2$. The invertible sheaf 
$\O_X (1) = \pi ^*(\O (d)) \otimes \O_X (-E)$ is very ample for all $d > 1$ 
and $K_X \cong \pi^*(\O (-4)) \otimes \O_X 2E)$ by \cite[II, Ex. 8.5 (b)]{AG}. 
Therefore $K_X (1) \cong \pi^*(\O (d-4) \otimes \O_X(E)$ and this sheaf is not globally generated for any $d$. 
Indeed, its restriction to $E$ is $E|_E$ is $\O_{\mathbb P^2} (-1)$ via the isomorphism $E \cong \mathbb P^2$, 
so the global sections restrict to zero along $E$. 

\item On the other hand, the (very) general member $Y \in |H^0(\O_X (1))|$ is isomorphic to the 
blow-up of a degree $d$ surface $S \subset \mathbb P^3$ at the point $p$, the exceptional 
divisor for the map $Y \to S$ being $F=E \cap Y \cong \mathbb P^1$ with self-intersection number 
$-1$ and $K_Y \cong \pi^*(K_S) \otimes F$. Therefore we have an exact sequence 
\[
0 \to \pi^* K_S \to K_Y \to K_Y|_F \to 0 
\]
Thinking of $F$ as $\mathbb P^1$, we have $K_Y \cong \O_F (-1)$ and 
$H^0(F,K_Y|_F)=H^1(F,K_Y|_F)=0$ so that $h^0(Y,K_Y)=h^0(Y,\pi^* K_S) \to \infty$ 
as $d \to \infty$. Thus $h^0(Y,K_Y) > h^2(X,\O_X)$ for $d \gg 0$ and we can apply 
Theorem \ref{best} (a).

\item By contrast, consider the 4-fold multiple point $Z$ 
defined by $\I_p^2$ on a smooth threefold $X$. The blow-ups at $P$ and at $Z$ 
are isomorphic, but now the embedding is given by $\pi^* \O(d) - 2E$ and we would have to 
check instead that $\pi^* K_X (d)$ is generated by sections for $d$ large, 
which is clear enough since $K_X (d)$ is generated by sections. 
In fact, the same holds for a multiple point $Z$ defined by any power 
$\I_p^n$ for any $n > 1$. 
\end{enumerate}
\em}\end{rmks}

\end{document}